\def\thebibliography#1{\section*{References}\list
  {[\arabic{enumi}]}{\settowidth\labelwidth{[#1]}\leftmargin\labelwidth
    \advance\leftmargin\labelsep
    \usecounter{enumi}}
    \def\newblock{\hskip .11em plus .33em minus -.07em}
    \sloppy
    \sfcode`\.=1000\relax}
\newtheorem{lemma}{Lemma}[section]
\newtheorem{corollary}[lemma]{Corollary}
\newtheorem{theorem}[lemma]{Theorem}
\newtheorem{remark}[lemma]{Remark}
\newtheorem{proposition}[lemma]{Proposition}
\newcommand{\R}{\mathbb{R}}
\newcommand{\N}{\mathbb{N}}
\newcommand{\z}{{\mathbb{R}^N}}
\newcommand{\loc}{\emph{loc\,}}
\date{}
\begin{document}
\title[Weighted Hardy's inequalities]{Weighted Hardy's inequalities and Kolmogorov-type operators}
\author[A. Canale]{A. Canale}
\address{Dipartimento di Ingegneria dell'Informazione, Ingegneria Elettrica e Ma\-te\-matica Applicata, Universit\'a degli
Studi di Salerno, Via Giovanni Paolo II, 132, I 84084 FISCIANO (Sa), Italy.}
\email{acanale@unisa.it}
\author[F. Gregorio]{F. Gregorio}
\address{Facult\"at f\"ur Mathematik und Informatik, FernUniversit\"at in Hagen, Universit\"atst. 11, 58084 Hagen, Germany.}
\email{federica.gregorio@fernuni-hagen.de}
\author[A. Rhandi]{A. Rhandi}
\thanks{The authors are members of the Gruppo Nazionale per l'Analisi Matematica, la Probabilit\'a e le loro Applicazioni (GNAMPA) of the Istituto Nazionale di Alta Matematica (INdAM)}
\address{Dipartimento di Ingegneria dell'Informazione, Ingegneria Elettrica e Ma\-te\-matica Applicata, Universit\'a degli
Studi di Salerno, Via Giovanni Paolo II, 132, I 84084 FISCIANO (Sa), Italy.}
\email{arhandi@unisa.it}
\author[C. Tacelli]{C. Tacelli}
\address{Dipartimento di Ingegneria dell'Informazione, Ingegneria Elettrica e Ma\-te\-matica Applicata, Universit\'a degli
Studi di Salerno, Via Giovanni Paolo II, 132, I 84084 FISCIANO (Sa), Italy.}
\email{ctacelli@unisa.it}
\subjclass[2010]{35K15, 35K65, 35B25, 34G10, 47D03}

\begin{abstract}
We give general conditions to state the weighted Hardy inequality
\begin{equation*}
c\int_{\R^N}\frac{\varphi^2}{|x|^2}d\mu\leq\int_{\R^N}|\nabla \varphi |^2
d\mu+C\int_{\R^N} \varphi^2d\mu,\quad \varphi\in C_c^{\infty}(\R^N),\,c\leq c_{0,\mu},
\end{equation*}
with respect to a probability measure $d\mu$. Moreover, the optimality of the constant $c_{0,\mu}$ is given.
The inequality is related to the following Kolmogorov equation perturbed by a singular potential
\begin{equation*}
Lu+Vu=\left(\Delta u+\frac{\nabla \mu}{\mu}\cdot \nabla u\right)+\frac{c}{|x|^2}u
\end{equation*}
for which  the existence of  positive solutions to the corresponding parabolic problem can be investigated.
The hypotheses on $d\mu$ allow the drift term to be of type $\frac{\nabla \mu}{\mu}=
-|x|^{m-2}x$ with $m> 0$.
\end{abstract}
\maketitle

\section{Introduction}
We denote by $L$ be the Kolmogorov operator
\begin{equation*}
Lu=\Delta u+\frac{\nabla \mu}{\mu}\cdot\nabla u
\end{equation*}
defined on smooth functions. In the standard setting one considers $\mu \in C_{loc}^{1,\alpha}\left( \R^N \right)$
for some $\alpha\in (0,1)$ and $\mu(x)>0$ for all $x\in \R^N$. In this case
the elliptic operator $L$ has coefficients belonging to
$C_{loc}^{\alpha}\left( \R^N \right)$. Therefore, one
can associate to $L$ a semigroup $\{T(t)\}_{t\geq 0}$
(not necessary strongly continuous)  in the space of bounded continuous functions, cf. \cite{ber-lor}.
Considering the measure $d\mu=\mu (x)dx$
and the weighted space $L^2_\mu:=L^2(\R^N,d\mu)$,
the operator $L$ can also be defined via the bilinear form
\[
a_\mu(u,v)=\int _{\R^N}\nabla u\cdot \nabla \overline{v}\, d\mu
\]
on $H^1_\mu=H^1(\R^N,d\mu)$, the Sobolev space of functions whose weak derivatives belong to $L^2_\mu.$
Indeed,
integrating by parts we get
\[
a_\mu(u,v)=-\int_{\R^N}Lu\overline{v}\, d\mu,
\qquad u,v\in C_c^{\infty}(\R^N).
\]
In particular,  $\int_{\R^N}Lu\,d\mu=0$ for every $u\in C_c^{\infty}(\R^N)$.
Then $d\mu$ is an invariant measure of $\{T(t)\}$ and hence
$\{T(t)\}$ can be
extended  to a positivity preserving and analytic $C_0$-semigroup
 on $L^2_\mu$ (see for example \cite{ber-lor}).

Recently in \cite{gol-gol-rhan} and \cite{gol-gol-rhan2}, a special class of operators of type
$L$ perturbed by the inverse square potential $V(x)=\frac{c}{|x|^2}$ was considered and the associated
evolution equation
$$(P_{V})\quad \left\{\begin{array}{ll}
\partial_tu(x,t)=Lu(x,t)+V(x)u(x,t),\quad \,x\in {\mathbb R}^N, t>0,\\
u(\cdot ,0)=u_0\geq 0\in L^2_\mu,
\end{array}
\right. $$
was studied.

It is well known that the potential $\frac{c}{|x|^2},\,c>0,$ is highly singular in the sense that it belongs to a borderline case where the strong
maximum principle and Gaussian bounds fail, cf. \cite{Aronson}. Moreover, it is not in the Kato class potentials.
If $V\leq \frac{C}{|x|^{2-\varepsilon}}$,
then the initial value problem is well-posed. But for $\varepsilon=0$ the problem
$\frac{\partial u}{\partial t}=\Delta u+V$ may not have positive solutions.
In \cite{bar-gol} Baras and Goldstein show that
the evolution equation associated to $\Delta+V$ admits
a unique positive solution
if $c\leq c_0(N):=\left( \frac{N-2}{2} \right)^{2}$ and no positive solutions exist if $c>c_0(N)$ (see also \cite{cab-mar}).
When it exists, the solution is
exponentially bounded, on the contrary,  if $c>c_0(N)$, there is the so called
{\it instantaneous blowup} phenomena.

Replacing the Laplacian by the Kolmogorov operator $L$  a similar behaviour was obtained
in \cite{gol-gol-rhan}.
The result was given using a relation between the weak
solution of $(P_V)$
and the {\it bottom of the spectrum}
of the operator $-(L+V)$
\begin{equation*}
\lambda_1(L+V):=\inf_{\varphi \in H^1_\mu\setminus \{0\}}
\left(\frac{\int_{{\mathbb R}^N}|\nabla \varphi |^2\,d\mu
-\int_{{\mathbb R}^N}V\varphi^2\,d\mu}{\int_{{\mathbb R}^N}\varphi^2\,d\mu}
\right).
\end{equation*}
Cabr\'e and Martel in \cite{cab-mar} show that the boundedness  of $\lambda_1(\Delta +V)$ is a necessary and
sufficient condition for the existence of positive and exponentially bounded in time
solutions to the associated initial value problem.
This result was extended in \cite{gol-gol-rhan} to the operator $L+V$.

For Ornstein-Uhlenbeck type operators $Lu=\Delta u-\sum_{i=1}^n A(x-a_i)\cdot \nabla u$,
$a_i\in \R^N$, $i=1\dots n$,
perturbed by multipolar inverse square potentials a weighted multipolar Hardy inequality
and related existence and nonexistence results were stated in \cite{can-pap}.
In such a case the invariant measure for these operators is
$d\mu=\mu_A(x)dx=Ke^{-\frac{1}{2}\sum_{i=1}^n\langle A(x-a_i),x-a_i\rangle}dx$.

Assuming that $0<\mu \in C^{1+\alpha}_{loc}({\mathbb R}^N)$
is a probability density on ${\mathbb R}^N$ we recall the following result, see \cite[Theorem 2.1]{gol-gol-rhan}.
\begin{theorem}\label{thm11} If $0\le V\in
L_{loc}^1({\mathbb R}^N)$. Then the following hold:
\begin{enumerate}
\item[(i)] If $\lambda_1(L+V)>-\infty$, then there exists a
nonnegative weak solution \\$u\in C([0,\infty),L^2_\mu)$ of $(P_V)$
satisfying
\begin{equation}\label{eq22}
\|u(t)\|_{L^2_\mu}\le Me^{\omega t}\|u_0\|_{L^2_\mu},\quad t\ge
0
\end{equation}
for some constants $M\ge 1$ and $\omega \in {\mathbb R}$.
 \item[(ii)] If
$\lambda_1(L+V)=-\infty$, then for any $0\le u_0\in
L^2_\mu\setminus \{0\},$ there is no nonnegative weak solution of $(P_V)$
satisfying \eqref{eq22}.
\end{enumerate}
\end{theorem}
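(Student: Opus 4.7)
\medskip

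\noindent\textbf{Proof proposal.}

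My plan is to address (i) by an approximation-from-below by bounded potentials and the form method in $L^2_\mu$, and to address (ii) by a resolvent/Allegretto--Piepenbrink argument that extracts a positive supersolution from the assumed solution and feeds it back into the Rayleigh quotient defining $\lambda_1(L+V)$.

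\emph{Part (i).} I would set $V_n := V\wedge n \in L^\infty(\R^N)$ for $n\in\N$. The quadratic form
\[
\mathfrak{a}_n(\varphi,\psi) := a_\mu(\varphi,\psi) - \int_{\R^N} V_n \varphi \overline{\psi}\, d\mu, \qquad D(\mathfrak{a}_n)=H^1_\mu,
\]
is symmetric, closed, and, because $V_n\le V$ and by the very definition of $\lambda_1(L+V)$, bounded below by $\lambda_1(L+V)>-\infty$. It therefore generates a self-adjoint analytic $C_0$-semigroup $(T_n(t))$ on $L^2_\mu$ with $\|T_n(t)\|\le e^{-\lambda_1(L+V)t}$. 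The Beurling--Deny first condition is inherited from $a_\mu$ (through $|\nabla|\varphi||=|\nabla \varphi|$ a.e.) together with $V_n\ge 0$, so $(T_n(t))$ is positivity preserving. The monotonicity $V_n\le V_{n+1}$ gives $0\le u_n:= T_n(\cdot)u_0\le u_{n+1}$, so by monotone convergence $u(t,x):=\lim_n u_n(t,x)$ exists, lies in $L^2_\mu$, and satisfies $\|u(t)\|_{L^2_\mu}\le e^{-\lambda_1(L+V) t}\|u_0\|_{L^2_\mu}$; passing to the limit in the weak formulation of $(P_{V_n})$ yields a nonnegative weak solution of $(P_V)$ obeying \eqref{eq22} with $\omega=-\lambda_1(L+V)$.

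\emph{Part (ii).} Arguing by contradiction, let $u$ be a nonnegative weak solution of $(P_V)$ with $u_0\not\equiv 0$ satisfying \eqref{eq22}. For a fixed $\lambda>\omega$ define the Laplace-type transform
\[
v(x) := \int_0^\infty e^{-\lambda t}\, u(t,x)\, dt.
\]
The bound \eqref{eq22} forces $v\in L^2_\mu$, while the strong maximum principle (applied to the approximations $u_n$ of Part (i), which dominate any nonnegative weak solution) yields $v>0$ a.e. Multiplying the equation $\partial_t u=(L+V)u$ by $e^{-\lambda t}$, integrating over $(0,\infty)$ and integrating by parts in time (the boundary term at infinity vanishes because $\lambda>\omega$) gives, in the weak sense,
\[
(L+V-\lambda)v = -u_0 \le 0,
\]
so $v$ is a positive weak supersolution of $(L+V-\lambda)w=0$. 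Testing this inequality against $\varphi^2/(v+\eps)$ for $\varphi\in C_c^\infty(\R^N)$ and $\eps>0$, and using the Cauchy--Schwarz-based pointwise inequality
\[
\nabla v\cdot \nabla\!\left(\frac{\varphi^2}{v+\eps}\right)\le |\nabla \varphi|^2,
\]
then sending $\eps\to 0$ by monotone/dominated convergence, I obtain the Allegretto--Piepenbrink bound
\[
\int_{\R^N}|\nabla \varphi|^2\, d\mu - \int_{\R^N} V\varphi^2\, d\mu \ge -\lambda \int_{\R^N}\varphi^2\, d\mu
\]
for every $\varphi\in C_c^\infty(\R^N)$. Hence $\lambda_1(L+V)\ge -\lambda>-\infty$, contradicting the hypothesis.

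\emph{Main obstacle.} The delicate point is justifying the ground-state computation in (ii) when the drift $\nabla\mu/\mu$ has growth (e.g.\ $|x|^{m-2}x$ with $m>0$ as in the introduction) and $V\in L^1_{loc}$ is singular. One must verify that $v+\eps\in H^1_\mu$, that $\varphi^2/(v+\eps)$ is an admissible test function against the weighted form $a_\mu$, and that the $V$-integrals converge; the compact support of $\varphi$ together with a cut-off of $v$ near the singular set of $V$ and an $\eps\to 0$ Fatou argument make this rigorous, but bookkeeping against the weight $d\mu$ and ensuring the boundary-in-time term $e^{-\lambda t}u(t)$ vanishes in $L^2_\mu$ requires care. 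Part (i) is more routine once one has enough regularity to identify the monotone pointwise limit of $u_n$ as a weak solution of the limit equation.
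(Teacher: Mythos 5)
First, a point of orientation: the paper does not actually prove Theorem \ref{thm11} --- it is recalled from \cite[Theorem~2.1]{gol-gol-rhan}, and the work of Section~2 consists precisely in verifying that the two ingredients that proof needs survive under $(H1)$: the integration-by-parts identity on $D(L)$ (Proposition \ref{pr:lor-H0}) and the strict positivity of nonnegative weak solutions on compact subsets of $\Omega$ (Lemma \ref{lm:strictly-positivity}). Measured against the Cabr\'e--Martel argument the citation points to, your Part~(i) is essentially the standard route (truncation $V_n=V\wedge n$, the uniform form bound $\lambda_1(L+V_n)\ge\lambda_1(L+V)$, monotone limit); the only unaddressed points there are routine (continuity in time of the monotone limit, passage to the limit in $\iint V_nu_n\phi$). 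Your Part~(ii) replaces the cited argument --- which compares $u(t)$ with $T_n(t)u_0$ and lets $\lambda_1(L+V_n)\to-\infty$ contradict \eqref{eq22} directly --- by a Laplace transform plus an Allegretto--Piepenbrink bound. That is a legitimate alternative in principle, but it is where the genuine gaps sit.

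Concretely: (a) your positivity claim for $v=\int_0^\infty e^{-\lambda t}u(t,\cdot)\,dt$ is justified by appeal to ``the approximations $u_n$ of Part (i), which dominate any nonnegative weak solution.'' The domination goes the other way --- what is needed (and true) is $u(t)\ge T_n(t)u_0$ --- and establishing this for an \emph{arbitrary} weak solution, which satisfies only the integral identity \eqref{eq:weak-equation} and has no a priori regularity, is itself the hardest step of the whole theorem. It is done by solving the dual parabolic problem with datum $\psi$ and inserting its solution as a test function, which is exactly the mechanism of Lemma \ref{lm:strictly-positivity}; without it you have neither $v>0$ nor any comparison at all. (b) The ground-state substitution $\psi=\varphi^2/(v+\eps)$ presupposes $v\in H^1_{loc}$ and the $H^1$-form (not merely distributional) version of the supersolution inequality; a priori $v$ is only an $L^2_\mu$ function satisfying $-\int vL\psi\,d\mu\ge\int(\lambda-V)v\psi\,d\mu$ for smooth $\psi$, and upgrading this when $\nabla\mu/\mu$ is unbounded (the drifts $-|x|^{m-2}x$ the paper is after) and $V$ is only in $L^1_{loc}$ is a real regularity step, which you flag as ``the delicate point'' but do not close. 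Until (a) and (b) are supplied, Part~(ii) is a plausible strategy rather than a proof; note that both your route and the paper's rest on the same missing comparison lemma, which is the one piece of mathematics the paper actually writes out.
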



%
%


The existence of positive solutions to $(P_V)$
is related to Hardy's inequality on the
weighted space $L^2_\mu$. The nonexistence of solutions is due to the optimality of the constant in the Hardy inequality.
Therefore, studying the bottom of the spectrum is equivalent to studying
the weighted Hardy inequality
\begin{equation}\label{whi}
\quad c\int_{\R^N}\frac{\varphi^2}{|x|^2}\,d\mu\leq\int_{\R^N}|\nabla \varphi |^2\,d\mu+C\int_{\R^N} \varphi^2\,d\mu,\quad \varphi\in H^1_\mu,
\end{equation}
and the sharpness of the best constant possible.

\noindent A special case
is given when $\mu_A(x)=Ke^{-\frac{1}{2}\langle Ax,x\rangle}$,
where $A$ is a positive real Hermitian $N\times N$ matrix
and $K$ is a normalizing constant. The operator
$L$ becomes the well-known symmetric Ornstein-Uhlenbeck operator $Lu=\Delta u-Ax\cdot \nabla u$. Using this approach
 it was proved in \cite{gol-gol-rhan} that if $\mu_A(x)=Ke^{-\frac{1}{2}\langle Ax,x\rangle}$,
then there exists an exponentially bounded weak solution $u\in C([0,\infty),L^2_{\mu_A})$
provided that $0\leq c \leq c_0(N)$, and no positive exponentially bounded weak solution exists if $c>c_0(N)$.

The result was generalized in \cite{gol-gol-rhan2}  for the density measure
$\mu(x)=Ke^{-\sigma(x) }$
with  $c_1|x|^2\leq \sigma(x) \leq c_2|x|^2$.
Furthermore,
under more general hypotheses on $d\mu$
the argument was extended to a larger class of  Kolmogorov operators, including
the case $\sigma(x)\sim |x|^m$ with $m>2$
and $c\leq c_\sigma$, for a suitable constant $c_{\sigma}$ which is not  the optimal one.

In this paper we generalize these results for a larger class of measures $d\mu$,
including the case $\mu(x)=Ke^{-\frac{|x|^m}{m}}$ with $m>0$
and obtaining also the optimality of the constant.
We observe that such $\mu$ requires to satisfy more general hypotheses, which allow degeneracy at one point.
Indeed, for $0<m<1$ such a measure does not belong to $C^{1,\alpha}_{loc}(\R^N)$ since the gradient is not bounded in $0$.
Then, we will consider measures which are not necessarily $(1,\alpha)$-H\"olderian in the whole space but such that
$\mu\in C^{1,\alpha}_{loc}(\Omega )\ \textrm{where}\ \Omega:=\R^N\setminus\{0\}$.
Firstly,  we need
that the unperturbed operator $L$ generates a semigroup.
Hence, further  conditions on $\mu$ are required in order to guarantee generation results on $L^2_\mu$. We consider the following hypotheses.

\noindent {\bf Hypothesis $(H1)$:}
\begin{enumerate}
\item [i)]  $\mu\in C^{1,\alpha}_{loc}( \Omega)$;
 \item [ii)]
  $\mu\in H^{1}_{loc}(\R^N)$,  $\frac{\nabla \mu}{\mu}\in L^r_{loc}(\R^N)\ \textrm{for some}\ r>N$, and $\inf_{x\in K}\mu(x)>0$ for any compact $K\subset \R^N$.
 \end{enumerate}
By \cite[Corollary 3.7]{alb-lor-man} we have that
the closure of $(L,C_c^{\infty}(\R^N))$ on $L^2_\mu$
generates a strongly continuous Markov semigroup $T(t)$ on $L^2_\mu$,
which is also analytic.
Thanks to this result we can claim that, under Hypothesis $(H1)$,
Theorem \ref{thm11} holds.

The second step is, then, to obtain a weighted Hardy inequality.
To this purpose we observe that the operator $L+V$ in $L^2_\mu$ is equivalent to the Schr\"odinger operator
$H=\Delta +(U_\mu+V)$ in $L^2(\z)$, where
$$U_\mu :=\frac{1}{4}\left|\frac{\nabla \mu}{\mu}\right|^2-\frac{1}{2}\frac{\Delta \mu}{\mu}.$$
Indeed, taking the transformation $T\varphi=\frac{1}{\sqrt{\mu}}\varphi$  we have  $L+V=THT^{-1}$.
Now, roughly speaking, if $V=\frac{c}{|x|^2}$ we expect Hardy's inequality to hold if
 $U_\mu+\frac{c}{|x|^2}\leq \frac{c_0(N)}{|x|^2}$  in a neighbourhood of the origin,
that is $c\leq c_0(N)-|x|^2U_\mu$.


Thus, we consider the following  hypothesis on $\mu (x)$.

\noindent {\bf Hypothesis $(H2)$:}
\begin{enumerate}
 \item [i)]  $\mu\geq 0$, $\mu^{1/2}\in H^{1}_{loc}(\R^N)$, $\Delta \mu\in L^1_{loc}(\R^N)$;
\item [ii)] the constant
$$c_{0,\mu}:=\liminf_{x\to 0}\left( c_0(N)-|x|^2U_\mu\right)$$ is finite;
 \item  [iii)] for every $R>0$ the function
\begin{align*}
 &U:=U_\mu-\frac{1}{|x|^2}\limsup_{x\to 0}|x|^2U_\mu
\end{align*}
  is bounded from above in $\R^N\setminus B_R$;
\item [iv)] there exists a $R_0>0$ such that
  \[
  |x|^2U(x)\leq \frac{1}{4}\frac{1}{|\log|x||^2}, \quad \forall x\in B_{R_0}.
 \]
\end{enumerate}
Under the assumption $(H2)$ we obtain the weighted Hardy inequality
\eqref{whi} for any $c\leq c_{0,\mu}$.
If condition ${\rm iv)}$ of $(H2)$ is not satisfied we still obtain the weighted Hardy inequality if we only assume

\noindent {\bf Hypothesis $(H2')$: }
\begin{itemize}
\item[$\bullet$] Conditions $\rm i),ii),iii)$ of $(H2)$ hold.
\end{itemize}
In this case the constant $c_{0,\mu}$ is not achieved and we obtain \eqref{whi} for any $c<c_{0,\mu}.$

As regards the optimality, we consider the following hypothesis.

\noindent {\bf Hypothesis $(H3)$: }
\begin{itemize}
 \item [i)]$\mu\in L^1_{loc}(\R^N);$
 \item  [ii)] There exists $\sup_{\delta\in \R}\big\{ \frac{1}{|x|^\delta} \in L^1_{loc}(\R^N,d\mu)\big\}=:N_0.$
\end{itemize}
Under condition $(H3)$ Hardy's inequality does not hold if $c>c_0(N_0)=\left( \frac{N_0-2}{2}\right)^{2}$.
If, instead,  we have

\noindent {\bf Hypothesis $(H3')$: } Conditions $i)$ and $ii)$ of $(H3)$ hold and
\begin{itemize}
 \item [iii)]
 $$\limsup_{\lambda\to 0^+}\lambda\int_{B_1}|x|^{\lambda-N_0}\,d\mu=+\infty ,  $$
\end{itemize}
then the inequality does not hold if $c\geq c_0(N_0)$.

It is obvious that we have the best result when  $(H2)$ and $(H3)$ (respectively $(H2')$ and $(H3')$)
hold and the constant $c_{0,\mu}$ coincides with the constant $c_0(N_0)$.

Therefore, we can state our main results.
\begin{theorem}\label{th:whi-reached} Assume assumptions $(H2)$ and $(H3)$ and $c_{0,\mu}=c_0(N_0)$.
Then the weighted Hardy inequality \eqref{whi}  holds if and only if $c\leq c_0(N_0)$.
\end{theorem}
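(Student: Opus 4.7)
The proof splits into the two directions of the biconditional, each one handled by a single hypothesis: under $(H2)$ alone the inequality \eqref{whi} holds for every $c\le c_{0,\mu}$, and under $(H3)$ alone it fails for every $c>c_0(N_0)$. Once the assumption $c_{0,\mu}=c_0(N_0)$ is in force, the combination of these two facts delivers the ``if and only if''.

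For the sufficiency part, the strategy is to pass to the unweighted Schr\"odinger picture via the ground-state substitution $u=\sqrt{\mu}\,\varphi$. Using $(H2)$~i) to justify an integration by parts, \eqref{whi} becomes the equivalent inequality
$$
\int_{\R^N}|\nabla u|^2\,dx\ \ge\ \int_{\R^N}\Bigl(\frac{c}{|x|^2}+U_\mu\Bigr)u^2\,dx\ -\ C\int_{\R^N}u^2\,dx.
$$
I would split the right-hand integral between $B_{R_0}$ and its complement. On the exterior, $(H2)$~iii) bounds the excess of $U_\mu$ over $c_0(N_0)/|x|^2$ by a constant, which is absorbed into the $C\int u^2\,dx$ term. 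On $B_{R_0}$, the very definition of $c_{0,\mu}$ combined with $(H2)$~iv) yields $|x|^2\bigl(c/|x|^2+U_\mu\bigr)\le c_0(N)+\tfrac{1}{4|\log|x||^2}$, and this excess is precisely the slack tolerated by the Brezis--V\'azquez improved Hardy inequality
$$
\int_{B_{R_0}}|\nabla u|^2\,dx\ \ge\ c_0(N)\int_{B_{R_0}}\frac{u^2}{|x|^2}\,dx+\tfrac14\int_{B_{R_0}}\frac{u^2}{|x|^2\log^2(R_0/|x|)}\,dx,
$$
provided $R_0$ is tuned so that the logarithmic weights line up.

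For the necessity part I argue by contradiction: suppose \eqref{whi} held for some $c>c_0(N_0)$, and test it against the radial family $\varphi_\lambda(x)=|x|^{-(N_0-2)/2+\lambda}\eta(x)$ for small $\lambda>0$, where $\eta\in C_c^\infty(\R^N)$ equals one on a neighbourhood of the origin. By $(H3)$~ii) the exponent lies below the critical $N_0$ for every $\lambda>0$, so both $\varphi_\lambda$ and $\varphi_\lambda/|x|$ belong to $L^2_\mu$. A direct differentiation gives, writing $\alpha_\lambda=\tfrac{N_0-2}{2}-\lambda$,
$$
\int|\nabla\varphi_\lambda|^2\,d\mu\ =\ \alpha_\lambda^2\int\frac{\varphi_\lambda^2}{|x|^2}\,d\mu+O(1),\qquad \int\varphi_\lambda^2\,d\mu=O(1),
$$
so \eqref{whi} would force $(c-\alpha_\lambda^2)\int\varphi_\lambda^2|x|^{-2}\,d\mu\le O(1)$. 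Since $c>c_0(N_0)$ keeps $c-\alpha_\lambda^2$ bounded below for small $\lambda$, $\int\varphi_\lambda^2|x|^{-2}\,d\mu$ must stay uniformly bounded; monotone convergence then makes $\int|x|^{-N_0}\eta^2\,d\mu$ finite, contradicting the very definition of $N_0$ in $(H3)$~ii).

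The delicate point is the sufficiency step at the critical value $c=c_{0,\mu}$: a plain Hardy inequality leaves a residual of order $|x|^{-2}|\log|x||^{-2}$, and only a sharp improved Hardy inequality of Brezis--V\'azquez type -- with its constant $1/4$ calibrated exactly to $(H2)$~iv) -- can close the gap. Once this local sharp estimate is at hand, the interior/exterior gluing through a smooth cutoff and the necessity argument are comparatively routine.
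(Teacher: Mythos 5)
Your sufficiency argument is essentially the paper's: the ground--state substitution $u=\sqrt{\mu}\,\varphi$ reduces \eqref{whi} to the improved unweighted inequality \eqref{eq:2} (this is Lemma \ref{th:equivalence-hary}), and the interior/exterior splitting with a smooth cutoff plus the logarithmically improved Hardy inequality on $B_{R_0}$ is exactly the proof of Theorem \ref{thm:3-6}. (A small point of attribution: the inequality with the remainder $\frac14\int\frac{u^2}{|x|^2|\log|x||^2}$ is the one of Theorem \ref{th:improved-hardy}, cited from Musina and Adimurthi et al.; the Brezis--V\'azquez remainder is an $L^q$ norm and, as the paper notes, would force $U_\mu\in L^{N/2+\varepsilon}_{loc}$.) This direction is sound.

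The necessity direction, however, has a genuine gap. Your contradiction lets $\lambda\to 0^+$ and concludes by monotone convergence that $\int_{B_r}|x|^{-N_0}\,d\mu<\infty$, which you claim contradicts $(H3)$ ii). It does not: $(H3)$ ii) only says that $N_0$ is the \emph{supremum} of the exponents $\delta$ with $|x|^{-\delta}\in L^1_{loc}(\R^N,d\mu)$, and that supremum may well be attained. The paper's own logarithmic example $\mu=\theta\,\bigl(\log\frac{1}{|x|}\bigr)^{\alpha}$ with $\alpha<-1$ satisfies $(H2)$, $(H3)$ and $c_{0,\mu}=c_0(N_0)=c_0(N)$, yet $\int_{B_{1/2}}|x|^{-N}\,d\mu=\omega_N\int_0^{1/2}r^{-1}\bigl(\log\tfrac1r\bigr)^{\alpha}\,dr<\infty$, so for such measures your argument yields no contradiction even though the inequality really does fail for every $c>c_0(N_0)$. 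The point you are missing is that the strict gap $c>c_0(N_0)$ lets you work at a \emph{fixed supercritical} exponent rather than approach the critical one: the paper picks one $\gamma$ with $\gamma^2<c$ and $\gamma<\frac{-N_0+2}{2}$ strictly, so that $2-2\gamma>N_0$ and hence $|x|^{2\gamma-2}\notin L^1_{loc}(\R^N,d\mu)$ is guaranteed by the definition of $N_0$; since $|x|^{\gamma}$ is then not in $H^1_\mu$, it truncates, $\varphi_n=\min\{|x|^{\gamma}\vartheta,n^{-\gamma}\}$, and sends $n\to\infty$, so that the numerator of the Rayleigh quotient behaves like $(\gamma^2-c)\int_{B_1\setminus B_{1/n}}|x|^{2\gamma-2}\,d\mu\to-\infty$ while the denominator stays bounded below. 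Replacing your limit $\lambda\to0^+$ by this fixed-exponent truncation argument repairs the proof.
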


\begin{theorem}\label{th:whi-no-reached}
Assume assumptions $(H2')$ and $(H3')$ with $N_0>2$ and $c_{0,\mu}=c_0(N_0)$. Then
\eqref{whi} holds if and only if $c<c_0(N_0)$.
 \end{theorem}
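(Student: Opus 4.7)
The plan is to prove the two directions separately. For the \emph{sufficiency} ($c<c_0(N_0)$), I would invoke the general weighted Hardy inequality announced in the introduction: under hypothesis $(H2')$, \eqref{whi} holds for every $c<c_{0,\mu}$, and since we are assuming $c_{0,\mu}=c_0(N_0)$, this is precisely what is needed. That statement is itself to be proved via the unitary equivalence $T\varphi=\mu^{-1/2}\varphi$ between $L+V$ on $L^2_\mu$ and the Schr\"odinger operator $\Delta+(U_\mu+V)$ on $L^2(\R^N)$, coupled with the classical Hardy inequality on $\R^N$ and the pointwise control of $U_\mu$ supplied by (ii)--(iii) of $(H2')$.

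The substantive part is the \emph{necessity}: for every $c\geq c_0(N_0)$ and every $C\in\R$, one must exhibit a $\varphi\in H^1_\mu$ violating \eqref{whi}. Motivated by the classical sharpness analysis for the Laplacian, I would test against the family
\[
\varphi_{\eps,\lambda}(x)=\eta_\eps(|x|)\,|x|^{-\delta_\lambda}\,\chi(|x|),\qquad \delta_\lambda=\tfrac{N_0-2-\lambda}{2},\quad \lambda\in(0,1),
\]
where $\chi\in C_c^\infty(\R^N)$ equals $1$ on $B_{1/2}$ and is supported in $B_1$, and $\eta_\eps$ is a radial cutoff vanishing on $B_\eps$ and equal to $1$ outside $B_{2\eps}$. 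The inner cutoff $\eta_\eps$ ensures $\varphi_{\eps,\lambda}\in H^1_\mu$ and will be removed by monotone convergence at the end. Writing $I_{\eps,\lambda}:=\int \eta_\eps^{2}\chi^{2}|x|^{\lambda-N_0}\,d\mu$, a direct computation gives
\[
\int\frac{\varphi_{\eps,\lambda}^{2}}{|x|^{2}}\,d\mu=I_{\eps,\lambda},\qquad
\int|\nabla\varphi_{\eps,\lambda}|^{2}\,d\mu=\delta_\lambda^{2}\,I_{\eps,\lambda}+R_{\eps,\lambda},
\]
while $\int\varphi_{\eps,\lambda}^{2}\,d\mu\leq \int_{B_1}|x|^{-2\delta_\lambda}\,d\mu$, the last quantity bounded uniformly in $\lambda\in(0,1)$ because $2\delta_\lambda<N_0$, which by (ii) of $(H3')$ makes $|x|^{-2\delta_\lambda}$ locally $d\mu$-integrable. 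The remainder $R_{\eps,\lambda}$ collects the cross and squared contributions of $\chi'$ (supported in $B_1\setminus B_{1/2}$, hence uniformly bounded) and of $\eta_\eps'$ (which vanish as $\eps\to0$, again by the integrability of $|x|^{-\delta}$ against $d\mu$ for $\delta<N_0$).

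Plugging $\varphi_{\eps,\lambda}$ into \eqref{whi} and rearranging gives $(c-\delta_\lambda^{2})\,I_{\eps,\lambda}\leq R_{\eps,\lambda}+C\int\varphi_{\eps,\lambda}^{2}\,d\mu$. Expanding $\delta_\lambda^{2}=c_0(N_0)-\tfrac{N_0-2}{2}\lambda+O(\lambda^{2})$ and using $c\geq c_0(N_0)$, for all sufficiently small $\lambda$ this forces
\[
\tfrac{N_0-2}{4}\,\lambda\,I_{\eps,\lambda}\leq M
\]
with $M$ independent of $\lambda$ and $\eps$. Sending $\eps\to 0$ and noting that $I_{\eps,\lambda}$ differs from $\int_{B_1}|x|^{\lambda-N_0}\,d\mu$ by an amount bounded uniformly in $\lambda$ (the discrepancy lives in the annulus $B_1\setminus B_{1/2}$, where $|x|^{\lambda-N_0}$ stays bounded), we conclude that $\lambda\int_{B_1}|x|^{\lambda-N_0}\,d\mu$ remains bounded as $\lambda\to 0^+$, in direct contradiction with (iii) of $(H3')$. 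The main obstacle is the cutoff bookkeeping: one must verify that $R_{\eps,\lambda}$ stays uniformly bounded (in particular that the $\eta_\eps'$ terms vanish as $\eps\to 0$) while the principal term $\lambda I_{\eps,\lambda}$ is allowed to blow up, a delicate balance that hinges on the precise definition of $N_0$ and on the assumption $N_0>2$ (which makes the coefficient $(N_0-2)/4$ positive so that the contradiction goes through).
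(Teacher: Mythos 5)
Your proposal is correct and follows essentially the same route as the paper: sufficiency comes from the $(H2')$ weighted Hardy inequality (Theorem \ref{pr:h20}) with $c_{0,\mu}=c_0(N_0)$, and necessity uses the same test functions $|x|^{\gamma}\vartheta$ with $\gamma=-\delta_\lambda\downarrow-\frac{N_0-2}{2}$, deriving a contradiction with condition $\rm iii)$ of $(H3')$ exactly as in Theorem 4.4. The only cosmetic differences are that your inner cutoff $\eta_\eps$ is actually unnecessary (since $2\gamma-2>-N_0$ already gives $|x|^{2\gamma-2}\in L^1_{loc}(\R^N,d\mu)$ by the definition of $N_0$, so $\varphi_\lambda\in H^1_\mu$ outright) and that you phrase the conclusion as uniform boundedness of $\lambda I_\lambda$ rather than as $\lambda_1=-\infty$.
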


Finally, putting together the weighted Hardy inequality \eqref{whi} and Theorem \ref{thm11},
we can state the following.
\begin{theorem}\label{thm-main2}
Assume that  hypotheses $(H1)$, $(H2)$  and $(H3)$ hold with $c_{0,\mu}=c_0(N_0)$
and $0\le V(x)\le \frac{c}{|x|^2}$,
then
the following assertions are satisfied:
\begin{enumerate}
\item[(i)] If $0\le c\le c_0(N_0)=\left( \frac{N_0-2}{2} \right)^2$, then there exists a weak
solution $u\in C([0,\infty),L^2_\mu)$ of $(P_V)$
satisfying
\begin{equation}\label{eq:est}
\|u(t)\|_{L^2_\mu}\le Me^{\omega t}\|u_0\|_{L^2_\mu},\quad
t\ge 0
\end{equation}
for some constants $M\ge 1$, $\omega \in {\mathbb R}$, and any $u_0\in L^2_\mu$. \item[(ii)] If
$c> c_0(N_0)$, then for any $0\le u_0\in L^2_\mu,\,u_0\neq 0,$
there is no positive weak solution of $(P_V)$
with $V(x)=\frac{c}{|x|^2}$ satisfying
(\ref{eq:est}).
\end{enumerate}
\end{theorem}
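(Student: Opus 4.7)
The statement combines the weighted Hardy inequality machinery developed earlier (Theorem \ref{th:whi-reached}) with the abstract existence/nonexistence dichotomy of Theorem \ref{thm11}. The plan is therefore to reduce both halves of the conclusion to statements about the bottom of the spectrum $\lambda_1(L+V)$ and then invoke Theorem \ref{thm11}. Hypothesis $(H1)$ is precisely what is needed so that the semigroup-theoretic framework of Theorem \ref{thm11} applies to our operator $L$ on $L^2_\mu$.

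For part (i), I would begin by observing that $(H2)$ and $(H3)$ with $c_{0,\mu}=c_0(N_0)$ put Theorem \ref{th:whi-reached} at our disposal, so there exists $C\in\R$ with
\[
c_0(N_0)\int_{\R^N}\frac{\varphi^2}{|x|^2}\,d\mu\le \int_{\R^N}|\nabla\varphi|^2\,d\mu+C\int_{\R^N}\varphi^2\,d\mu,\qquad \varphi\in H^1_\mu.
\]
Since $0\le V(x)\le c/|x|^2$ with $c\le c_0(N_0)$, multiplying and integrating gives
\[
\int_{\R^N} V\varphi^2\,d\mu\le c\int_{\R^N}\frac{\varphi^2}{|x|^2}\,d\mu \le \int_{\R^N}|\nabla \varphi|^2\,d\mu+C\int_{\R^N}\varphi^2\,d\mu,
\]
so that $\lambda_1(L+V)\ge -C>-\infty$. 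Theorem \ref{thm11}(i) then yields a nonnegative weak solution $u\in C([0,\infty),L^2_\mu)$ of $(P_V)$ with the exponential bound \eqref{eq:est}.

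For part (ii), with $V(x)=c/|x|^2$ and $c>c_0(N_0)$, I would use the ``only if'' part of Theorem \ref{th:whi-reached}: the weighted Hardy inequality \eqref{whi} fails for every choice of the constant $C$. Unwinding the definition of $\lambda_1$, this failure for all $C$ is equivalent to $\lambda_1(L+V)=-\infty$, because for each fixed $C$ there is a test function $\varphi\in H^1_\mu$ with
\[
\int_{\R^N}|\nabla\varphi|^2\,d\mu - c\int_{\R^N}\frac{\varphi^2}{|x|^2}\,d\mu < -C\int_{\R^N}\varphi^2\,d\mu,
\]
i.e.\ the Rayleigh quotient can be made less than $-C$ for arbitrarily large $C$. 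The nonexistence statement in Theorem \ref{thm11}(ii) then gives the conclusion.

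The proof is thus essentially a bookkeeping argument once Theorem \ref{th:whi-reached} is in hand; the only delicate point is the equivalence between failure of \eqref{whi} for every $C$ and $\lambda_1(L+V)=-\infty$, which I expect to be the main thing worth spelling out carefully. The pointwise domination $0\le V\le c/|x|^2$ used in (i) is crucial: the positivity of $V$ is what lets us invoke Theorem \ref{thm11}(i) (which requires $V\ge 0$), while the upper bound transfers the Hardy inequality from $c/|x|^2$ to the actual potential $V$ with no loss.
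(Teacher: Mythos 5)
Your proposal is correct and follows essentially the same route as the paper: the paper obtains Theorem \ref{thm-main2} by combining the sharp weighted Hardy inequality of Theorem \ref{th:hardy-achieved} (whose optimality proof in Section 4 consists precisely in exhibiting test functions driving the Rayleigh quotient, i.e.\ $\lambda_1(L+V)$, to $-\infty$) with the Cabr\'e--Martel dichotomy. The only cosmetic difference is that under the degenerate hypothesis $(H1)$ one should invoke Theorem \ref{th:cabr-mart-H1} (the version of Theorem \ref{thm11} proved in Section 2 for such $\mu$) rather than Theorem \ref{thm11} itself, a point you already acknowledge.
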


If instead, assumptions $(H2')$ and $(H3')$ with $N_0>2$ are fulfilled, the same statement holds but
the constant $c_0(N_0)$ is not achieved.

These hypotheses on the measure $\mu$ allow us to treat the case
\begin{equation*}
\mu(x)=ke^{-b{|x|^m}}
\end{equation*}
for $b,m>0$, associated to the operator
\begin{equation*}
Au=\Delta u-bm|x|^{m-2}x\cdot \nabla u+ \frac{c}{|x|^2}u,
\end{equation*}
since one can see that $\mu$ satisfies  $(H1)$, $(H2)$ and $(H3)$ with
constant $c_{0,\mu}=c_0(N_0)=c_0(N)$. Therefore, for such $\mu$, Theorem \ref{thm-main2} holds.

Moreover, under the same assumptions, one can also consider the measures
$\mu_{\beta}(x)=k\frac{1}{|x|^\beta}e^{-b|x|^m}$.
For such measures  Hypothesis $(H1)$ is not fulfilled, however we obtain the weighted Hardy inequality with best constant.
Indeed, also in this case, the constant of $(H2)$ coincides with the  best constant of $(H3)$ and it depends
upon the parameter $\beta$ with $\beta<N-2$.
We have explicitly $N_0=N-\beta$ and
$c_{0,\mu}=c_0(N_0)=\left( \frac{N-\beta-2}{2} \right)^2$.

For  $\mu(x)=\frac{1}{|x|^\beta }$ 
we recover the well known Caffarelli-Nirenberg inequality
\begin{equation*}
\left( \frac{N-\beta-2}{2} \right)^2
\int_{\R^N}\frac{\varphi^2}{|x|^2}|x|^{-\beta}dx\leq\int_{\R^N}|\nabla \varphi |^2
|x|^{-\beta}\,dx,\quad \varphi\in H^1(\R^N),
\quad \beta<N-2.
\end{equation*}


For  measures behaving like a logarithm near the origin
$$\mu \sim \left( \log \frac{1}{|x|} \right)^{\alpha},$$
we obtain \eqref{whi} with constant $c_0(N_0)=c_0(N)$.
If $\alpha \leq 0$ the constant is achieved and it is the best one.
Indeed, $\mu$ satisfies $(H1)$, $(H2)$ and $(H3)$ provided that $\alpha \leq 0$. If instead $\alpha>0$ then $\mu$ satisfies $(H1)$, $(H2')$ and $(H3')$. So, the constant is not achieved, but it still is the best one.

Finally, we also provide an example in which the constant in \eqref{whi} of $(H2')$ does not coincide with the optimal one of $(H3)$.


\section{Weak solutions and bottom of the spectrum}
In this section we prove that, under condition $(H1)$ on $\mu$, Theorem \ref{thm11} holds. Firstly we observe that $C_c^{\infty}(\R^N)$ is dense in $H^1_\mu$. This is given by the condition $\mu^{1/2}\in H^{1}_{loc}(\R^N)$ (see \cite[Theorem 1.1]{tolle-2012} ) which is ensured by
Hypothesis $ii)$ of $(H1)$. Indeed, $\mu\in H^{1}_{loc}(\R^N)$ implies   $\mu\in L^1_{loc}(\R^N)$ and $\nabla\mu \in L^2_{loc}(\R^N)$. Moreover, $\frac{\nabla \mu}{\mu}\in L_{loc}^r(\R^N)$ implies
$\frac{\nabla \mu}{\mu}\in L_{loc}^2(\R^N)$. Then $\mu^{\frac{1}{2}}\in L^2_{loc}(\R^N)$ and
$$\int_K \left|\nabla \mu^{\frac{1}{2}} \right|^2\,dx=\int_K\frac{1}{4}\frac{|\nabla \mu|^2}{\mu}\,dx
\leq \frac{1}{4}\left( 	\int_K\left|\frac{\nabla \mu}{\mu}\right|^2\,dx \right)^{\frac{1}{2}}\left( \int_K \left|\nabla \mu\right|^2\, dx \right)^{\frac{1}{2}}<\infty,$$
for every compact set $K\subset \R^N$. 
Moreover, one also obtains  that  $C_c^{\infty}\left( \Omega \right)$ is dense in $H^1_{\mu}$ by Hypothesis $ii)$ of $(H1)$ (see  the Appendix).

Now, we precise the definition of weak solutions.
Let us recall the problem
$$(P_{V})\quad \left\{\begin{array}{ll}
\partial_tu(x,t)=Lu(x,t)+V(x)u(x,t),\quad t>0,\,x\in {\mathbb R}^N,\\
u(\cdot ,0)=u_0\geq 0\in L^2_\mu.
\end{array}
\right. $$
We say that $u$ is a weak solution to $(P_V)$ if, for each
$T,\,R>0$, we have
$$u\in C([0,T],L^2_\mu),\, Vu\in L^1(B_R\times (0,T),d\mu \,dt) \hbox{\ and }$$
\begin{equation}\label{eq:weak-equation}
\int_0^T\int_{{\mathbb R}^N}u(-\partial_t\phi -L\phi)\,d\mu\,dt-\int_{{\mathbb R}^N}u_0\phi(\cdot ,0)\,d\mu=\int_0^T\int_{{\mathbb R}^N}Vu\phi
 \,d\mu \,dt
\end{equation}
 for all $\phi \in W_{loc}^{2,1}(Q_T)$ having compact support with $\phi(\cdot ,T)=0$,
where
$B_R$ denotes the open ball of ${\mathbb R}^N$ of radius $R$ and center $0$ and
for $T>0$, we write $Q_T$ for ${\mathbb R}^N\times [0,T]$.

Theorem \ref{thm11} is based on Cabr\'e-Martel's idea and it was proved
in \cite[Theorem 2.1]{gol-gol-rhan} for measures $\mu$ belonging to $C^{1,\alpha}_{loc}(\R^N)$.
The proof relies on certain  properties of the operator $L$ and its corresponding semigroup $T(t)$ in $L^2_\mu$.
Furthermore, the strict positivity on compact sets of $T(t)u_0$, if $0\leq u_0\in L^2_\mu \setminus\{0\}$ is required.

Hence, in order to claim that Theorem \ref{thm11} holds in our situation, we only have to ensure that these properties hold. This is stated in Proposition \ref{pr:lor-H0} and Lemma \ref{lm:strictly-positivity} below.

We recall that the measure $\mu(dx)$ is the infinitesimally invariant measure for the operator $L$, i.e.
\begin{equation*}
 \int L\varphi \,d\mu=0\,\text{ for every } \varphi\in C_c^{\infty}(\R^N).
\end{equation*}
Moreover, the operator $L$ is symmetric on $C_c^{\infty}(\R^N)$, i.e.
\begin{equation}\label{eq:int-part-mu}
\int_{\R^N} Lu \overline{v}\, d\mu=-\int \nabla u\cdot \nabla \overline{v}\,d\mu,\quad\text{ for every }u,v \in C_c^\infty(\R^N).
\end{equation}
Hence, by \cite[Corollary 3.7]{alb-lor-man}, we have that
the closure of $(L,C_c^{\infty}(\R^N))$ on $L^2_\mu$
generates a strongly continuous Markov semigroup $T(\cdot)$ on $L^2_\mu$,
which is also analytic.

Let  $(L,D(L))$ be the self-adjoint operator defined by the closure of $(L,C_c^{\infty}(\R^N))$ on $L^2_\mu$.

\begin{proposition}\label{pr:lor-H0} The following assertions hold.
\begin{itemize}
 \item [i)] $D(L)\subset H^1_{\mu}$.
 \item [ii)] For every $f\in D(L),\, g\in H^1_{\mu}$ we have
  \[\int L f \overline{g} \,d\mu=-\int \nabla f\cdot \nabla \overline{g}\,d\mu.\]
 \item [iii)] $T(t)L^2_\mu\subset D(L)$ for all $t>0$.
  \end{itemize}
\end{proposition}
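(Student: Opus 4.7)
The plan is to identify the self-adjoint operator $L$ with the operator associated to the closed quadratic form $a_\mu$ on $L^2_\mu$ and then to read off all three assertions from the standard form–operator correspondence combined with the analyticity of $T(t)$. The identity \eqref{eq:int-part-mu} already shows that $(L,C_c^{\infty}(\R^N))$ is symmetric and non-positive on $L^2_\mu$, so its closure $-L$ is self-adjoint and non-negative; consequently the square root $(-L)^{1/2}$ is well defined, with $D(L)\subset D\bigl((-L)^{1/2}\bigr)$ and $\bigl\|(-L)^{1/2}u\bigr\|_{L^2_\mu}^{2}=-(Lu,u)_{L^2_\mu}$ for every $u\in D(L)$.

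To prove (i) and (ii) I would first identify $D\bigl((-L)^{1/2}\bigr)$ with $H^1_\mu$. Since $C_c^{\infty}(\R^N)$ is a core for $L$ by construction, it is automatically a core for the associated form, so $D\bigl((-L)^{1/2}\bigr)$ is the completion of $C_c^{\infty}(\R^N)$ with respect to the form norm $\bigl(\|u\|_{L^2_\mu}^{2}+\bigl\|(-L)^{1/2}u\bigr\|_{L^2_\mu}^{2}\bigr)^{1/2}$. On $C_c^{\infty}(\R^N)$ the identity \eqref{eq:int-part-mu} gives $\bigl\|(-L)^{1/2}u\bigr\|_{L^2_\mu}^{2}=a_\mu(u,u)=\int_{\R^N}|\nabla u|^2\,d\mu$, so the form norm coincides with $\|\cdot\|_{H^1_\mu}$. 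Combined with the density of $C_c^{\infty}(\R^N)$ in $H^1_\mu$ established earlier in this section, this yields $D\bigl((-L)^{1/2}\bigr)=H^1_\mu$, whence (i). For (ii), I would take $f\in D(L)$ and $g\in H^1_\mu$, choose $f_n\in C_c^{\infty}(\R^N)$ converging to $f$ in the graph norm of $L$ (hence in $H^1_\mu$, by the identification above applied to the difference $f_n-f$) and $g_n\in C_c^{\infty}(\R^N)$ converging to $g$ in $H^1_\mu$, apply \eqref{eq:int-part-mu} to the pair $(f_n,g_n)$, and pass to the limit in both sides using the Cauchy–Schwarz inequality.

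Assertion (iii) is a direct consequence of the analyticity of $T(t)$: for an analytic $C_0$-semigroup one has $T(t)X\subset\bigcap_{k\geq 1}D(L^k)$ for every $t>0$, which in particular gives $T(t)L^2_\mu\subset D(L)$. The only genuinely delicate ingredient in the whole argument is the identification $D\bigl((-L)^{1/2}\bigr)=H^1_\mu$, and its nontrivial input is the density of $C_c^{\infty}(\R^N)$ in $H^1_\mu$; this in turn rests on the assumption $\mu^{1/2}\in H^{1}_{loc}(\R^N)$ encoded in Hypothesis $(H1)$, precisely the point at which the weakened regularity of $\mu$ must be handled with care.
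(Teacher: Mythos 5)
Your route through the abstract form--operator correspondence is legitimate and, for (ii) and (iii), ends up doing the same work as the paper: the paper proves (i)--(ii) by taking $u_n\in C_c^{\infty}(\R^N)$ converging to $u\in D(L)$ in graph norm, using \eqref{eq:int-part-mu} to see that $(\nabla u_n)$ is Cauchy in $(L^2_\mu)^N$, and passing to the limit; (iii) is deduced from analyticity exactly as you do. So the two arguments are close in substance, yours being the ``packaged'' version.

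There is, however, one point where your argument as written does not close. For assertion (i) you need the inclusion $D\bigl((-L)^{1/2}\bigr)\subseteq H^1_\mu$, and you derive the identification $D\bigl((-L)^{1/2}\bigr)=H^1_\mu$ from the density of $C_c^{\infty}(\R^N)$ in $H^1_\mu$. But density only gives the \emph{opposite} inclusion $H^1_\mu\subseteq D\bigl((-L)^{1/2}\bigr)$: it tells you every $H^1_\mu$ function is a form-norm limit of test functions, not that every form-norm limit of test functions lies in $H^1_\mu$. For the direction you actually need, you must show that if $u_n\in C_c^{\infty}(\R^N)$, $u_n\to u$ in $L^2_\mu$ and $\nabla u_n\to G$ in $(L^2_\mu)^N$, then $G$ is the distributional gradient of $u$ --- equivalently, that the gradient is closable from $L^2_\mu$ into $(L^2_\mu)^N$, or that $H^1_\mu$ is complete. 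This is not automatic for degenerate weights; it is exactly where Hypothesis $(H1)$ ii) enters through $\inf_{x\in K}\mu(x)>0$ on compacta, which lets you pass from $L^2_\mu$-convergence to convergence against test functions in Lebesgue measure (the paper does this explicitly with the constant $C_\psi=\bigl(\inf_{x\in \supp\psi}\mu(x)\bigr)^{-1}$). Your closing paragraph attributes the delicacy solely to the density statement (i.e.\ to $\mu^{1/2}\in H^1_{loc}$), which is the input for the inclusion you do not need; once you add the closability step, the proof is complete.
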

\begin{proof} i) and ii).
Let $u\in D(L)$. Then there exists $u_n\in C_c^{\infty}(\R^N)$ such that $u_n\to u$ in $L^2_\mu$ and $Lu_n\to Lu$ in $L^2_\mu.$
By \eqref{eq:int-part-mu} we have
\begin{align*}
 \|\nabla u_n-\nabla u_m\|^2_{\mu}&=-\int (\overline{u_n}-\overline{u_m})(Lu_n-Lu_m)\,d\mu \\&\leq \|u_n-u_m\|_{\mu}\|Lu_n-Lu_m\|_\mu\;.
\end{align*}
Then, $\nabla u_n$ converges to a function $G\in \left(L_\mu^2\right)^N$.
On the other hand, one has
\[
 -\int Lu_n\overline{\varphi} \,d\mu=\int \nabla u_n\cdot \nabla \overline{\varphi}\, d\mu
\]
for every $\varphi \in C_c^{\infty}(\R^N)$. Taking the limit for $n\to \infty$, since $Lu_n\to Lu$ and $\nabla u_n\to G$, we have
\[
 -\int Lu \overline{\varphi}\, d\mu=\int G\cdot \nabla \overline{\varphi}\, d\mu.
\]
By a density argument, this holds true for any $\varphi \in H^1_\mu.$

It remains  to show that the components  $G_i$ of $G$ are the weak derivatives of $u.$
Fixing $\psi \in C_c^{\infty}(\R^N)$, one obtains
\[
 \int \partial _i u_n\overline{\psi}\, dx=-\int u_n \partial_i \overline{\psi}\, dx.
\]
Then, taking the limit as $n\to \infty$, one has
$
 \int G _i \overline{\psi}\, dx=-\int u \partial_i \overline{\psi}\, dx.
$

Indeed,
\begin{align*}
 \left|\int \partial _i u_n\overline{\psi}\, dx-\int G_i \overline{\psi}\, dx \right|&\leq \int \left|  G_i-\partial _iu_n \right| |\psi|\,dx\\
& \leq C_\psi\int \left|  G_i-\partial _iu_n \right| |\psi|\,d\mu\to 0\text{ as } n\to \infty,
\end{align*}
where $\frac {1}{C_\psi}=\inf _{x\in supp \psi}\mu(x).$
Similarly we have $\lim_{n\to \infty}\int u_n \partial_i \overline{\psi}\, dx=\int u \partial_i \overline{\psi}\, dx$.

Assertion iii) follows from the analyticity of the semigroup $T(\cdot)$
(cf. \cite[Theorem II.4.6]{EN00}, \cite[Lemma 1.3.3]{fuk-osh-tak}).\\
\end{proof}

Since $C_c^{\infty}(\Omega)$ is dense in $H^1_\mu$, where $\Omega =\R^N\setminus \{0\}$, we can prove the following Lemma.
\begin{lemma}\label{lm:strictly-positivity}
Let $V$ be a nonnegative function belonging to $L^1_{loc}(\R^N)$. Let $u$ be a nonnegative weak solution of $(P_V)$. Then, for every compact set $K\subset \Omega$ and $t>0$
there exists $c(t)>0$ (not depending on $V)$
such that $$u(x,t)\geq c(t)\int _{K}u_0\, d\mu ,\quad (x,t) \in K\times [0,T].$$
\end{lemma}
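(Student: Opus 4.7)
The plan is to reduce the statement to a strict positivity estimate for the free semigroup $T(t)$ (the one with $V=0$) by showing that a nonnegative weak solution of $(P_V)$ always dominates $T(t)u_0$, and then to exploit the fact that on $\Omega=\R^N\setminus\{0\}$ the coefficients of $L$ are locally Hölder continuous, so that one has a continuous, strictly positive parabolic kernel on compact subsets of $\Omega$. The phrase ``not depending on $V$'' in the statement essentially forces this route, since any resulting $c(t)$ will be inherited from $T(t)$ alone.

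First I would truncate the potential by setting $V_n:=V\wedge n\in L^\infty(\R^N)$. Then $L+V_n$ is a bounded perturbation of the generator of $T(t)$ from Proposition \ref{pr:lor-H0}, and it generates a positivity preserving analytic semigroup $T_n(t)$ on $L^2_\mu$, given by the norm-convergent Dyson series, with $T_n(t)u_0\geq T(t)u_0\geq 0$. Inserting into the weak formulation \eqref{eq:weak-equation} a test function of the form $\phi(x,s)=\chi(s)\bigl(T_n(t-s)^{*}\psi\bigr)(x)$, with $\psi\in C_c^\infty(\R^N)$ nonnegative and $\chi$ a suitable time cutoff, and using $V\geq V_n\geq 0$ together with $u,\psi\geq 0$, one derives $u(\cdot,t)\geq T_n(t)u_0$ in $L^2_\mu$, and hence $u(\cdot,t)\geq T(t)u_0$ after letting $n\to\infty$ by monotone convergence. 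This is essentially the scheme of \cite{cab-mar} and \cite{gol-gol-rhan}, now carried out under the weaker regularity of $\mu$ allowed by $(H1)$.

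For the second step, I pick an open set $U$ with smooth boundary satisfying $K\Subset U\Subset \Omega$ and consider the Dirichlet realization $L_U$ of $L$ in $L^2(U,d\mu)$. Since $\overline{U}\subset\Omega$, the coefficients of $L$ are Hölder continuous and uniformly elliptic on $\overline{U}$, so classical parabolic Schauder theory provides a jointly continuous kernel $p_U(t,x,y)$ on $(0,\infty)\times U\times U$ for the associated semigroup $T_U(t)$, strictly positive by the parabolic strong minimum principle (equivalently, Moser's Harnack inequality). The maximum principle gives $T(t)u_0\geq T_U(t)(u_0\mathbf{1}_U)$ on $U$, so setting $c(t):=\min_{x,y\in K}p_U(t,x,y)>0$ yields
\[
(T(t)u_0)(x)\;\geq\;\int_U p_U(t,x,y)\,u_0(y)\,d\mu(y)\;\geq\;c(t)\int_K u_0\,d\mu,\qquad x\in K,
\]
which, combined with Step 1, closes the argument.

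The hard part will be Step 1: upgrading the formal Duhamel inequality $u\geq T(t)u_0$ to a rigorous statement for weak solutions in the sense of \eqref{eq:weak-equation}, given that $Vu$ is only known to be locally integrable. The truncation $V_n=V\wedge n$ together with monotone convergence handles the singularity of $V$, but the admissible test functions in \eqref{eq:weak-equation} must have compact support in space, so I will have to localize by a spatial cutoff, establish the inequality at the localized level, and then remove the cutoff using that $C_c^\infty(\Omega)$ is dense in $H^1_\mu$ together with the domain characterization in Proposition \ref{pr:lor-H0}. Step 2 is standard parabolic regularity on the punctured space $\Omega$ where, thanks to $(H1)$\,i), the operator $L$ behaves like a nondegenerate uniformly elliptic operator with Hölder coefficients.
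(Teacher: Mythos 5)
Your Step 2 is essentially the paper's mechanism (a Dirichlet problem on a relatively compact subset of $\Omega$, a jointly continuous strictly positive kernel, and $c(t)=\min_{K\times K}$ of that kernel), but your Step 1 --- the reduction to $u(\cdot,t)\ge T(t)u_0$ for the \emph{global} semigroup --- is where the proof has a genuine gap, and you acknowledge as much without closing it. The test function $\phi(x,s)=\chi(s)\bigl(T_n(t-s)^{*}\psi\bigr)(x)$ is not admissible in \eqref{eq:weak-equation}: it is not compactly supported in space, and it is not known to lie in $W^{2,1}_{loc}(Q_T)$, since under $(H1)$ the domain of the closure of $L$ is only known to embed into $H^1_\mu$ (Proposition \ref{pr:lor-H0}); the drift is merely in $L^r_{loc}$ and $\mu$ may degenerate at the origin, so second-order interior regularity of $T_n(t-s)^{*}\psi$ is not free. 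Localizing by a spatial cutoff $\eta$ produces commutator terms of the form $\int u\,\phi\,L\eta$ and $\int u\,\nabla\phi\cdot\nabla\eta$ whose control requires quantitative decay of $T_n(s)^{*}\psi$ and its gradient against $d\mu$ near infinity and near the origin; the density of $C_c^\infty(\Omega)$ in $H^1_\mu$ does not by itself dispose of these. Since the whole point of the lemma is the weakened hypothesis $(H1)$, ``this is essentially the scheme of \cite{cab-mar}'' cannot be invoked as is.

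The paper's proof avoids the global semigroup entirely, and you could too. Fix an annulus $C_R=B_R\setminus \overline B_{1/R}\supset K$ (so $\overline{C_R}\subset\Omega$, where $L$ is uniformly elliptic with H\"older coefficients), take $\varphi\in C_c^\infty(C_R)$ with $0\le\varphi\le 1$, and let $v$ be the classical solution of the Dirichlet problem $v_t=Lv$ on $C_R$ with $v(\cdot,0)=\varphi u_0$; its Green function is continuous and strictly positive, giving the lower bound $v(x,t)\ge c(t)\int_K v(\cdot,0)\,d\mu$ directly. Subtracting the weak formulation of $v$ from \eqref{eq:weak-equation} yields $\int_0^T\!\!\int_{C_R}(v-u)(-\partial_t\phi-L\phi)\,d\mu\,dt\le 0$ for every admissible $\phi\ge 0$, because $\varphi u_0\le u_0$ and the term $\int_0^T\!\!\int Vu\phi\,d\mu\,dt$ is nonnegative and sits on the favourable side --- so no truncation $V_n$ is ever needed. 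One then tests with the $W^{2,1}_2(C_R\times(0,T))$ solution of the backward problem $\partial_t\phi+L\phi=-\psi$, $\phi|_{\partial C_R}=0$, $\phi(\cdot,T)=0$ (Ladyzhenskaya--Solonnikov--Uraltseva) to conclude $u\ge v$ on $C_R$, and takes the supremum over $\varphi$. Every test function lives on the bounded annulus, so all the regularity and support issues that block your Step 1 simply do not arise.
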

\begin{proof} Let $u_0 \in C_c^{\infty}(\R^N)$ nonnegative and let $u$ be a nonnegative weak solution of $(P_V)$.
Let
$C_R=B_R\setminus \overline B_{1/R}$
such that $K\subset C_R$ and let $\varphi\in C_c^{\infty}(C_R)$ such that $0\leq \varphi\leq 1$.

Consider the problem
\[(Pb)\,
 \left\{
 \begin{array}{ll}
 v_t(x,t)=Lv(x,t),& \text{ on }C_R\times (0,T],\\
 v(x,t)=0,&\text{ on } \partial C_R,\\
 v(x,0)=\varphi u_0.
 \end{array}
 \right.
\]
Then Problem $(Pb)$ admits a solution
$v\in C^{2+\alpha,1+\frac{\alpha}{2}}(\overline C_R\times [0,T])$. Moreover,
\[
v(x,t)=\int_{C_R}G(t,x,y)v(y,0)\,dy,
\]
where $G$ is a strictly positive and continuous function on $(0,+\infty)\times C_R\times C_R$.
Let $c(t)=\min_{(x,y)\in K\times K} G(t,x,y)$. We have for every $x\in K$
\[
 v(x,t)\geq \int_{K}G(t,x,y)v(y,0)\,dy\geq c(t)\int_Kv(y,0)\,dy.
\]
Furthermore,  $v$ is a weak solution to $v_t=Lv$ in $C_R$. In particular,
for all $\phi \in W^{2,1}_2 (C_R\times [0,T])$ with $\phi(\cdot,0)\geq0$
having compact support with $\phi(\cdot, T)=0$, we have
$$\int_0^T\int_{C_R}v(-\partial_t\phi -L\phi)\,d\mu\,dt-\int_{C_R }(\varphi u_0)\phi(\cdot ,0)\,d\mu=0.$$
Comparing with \eqref{eq:weak-equation}, one obtains
\begin{equation}\label{eqA}
\int_0^T\int_{C_R}(v-u)(-\partial_t\phi -L\phi)\,d\mu\,dt=\int_{C_R }(\varphi u_0-u_0-Vu )\phi(\cdot ,0)\,d\mu\leq 0.
\end{equation}

Fix $T,\,R>0$, $0\le \psi
\in C_c^\infty(C_R\times [0,T])$ and consider the parabolic problem
$$\left\{\begin{array}{ll}
\partial_t\phi +L\phi =-\psi ,& \hbox{on }C_R\times (0,T),\\
\phi|_{\partial C_R\times (0,T)}= 0,\\
\phi(x,T)=0,& x\in C_R.
\end{array} \right.
$$
By \cite[Theorem IV.9.1]{lad-sol-ura}, one obtains a solution $0\le \phi \in W^{2,1}_2(C_R\times (0,T))$. By a standard argument, one
can insert the solution $\phi$ in (\ref{eqA}). Therefore,
$$\int_0^T\int_{C_R }(v-u)\psi \,d\mu \,dt \le 0$$ for all $0\le \psi \in C_c^\infty(C_R\times [0,T])$. Thus,
$$u\geq v\geq c(t)\int_{K}\varphi u_0\, d\mu.$$
Since the last inequality holds for any $\varphi \in C_c^\infty (C_R)$ one obtains
$$u\geq c(t)\int_{K}u_0\, d\mu.$$
\end{proof}

Therefore, we can state the following theorem, for which we omit the proof because it is similar to that of Theorem \ref{thm11}, see \cite[Theorem 2.1]{gol-gol-rhan}.

\begin{theorem}\label{th:cabr-mart-H1}  Assume that $\mu$ satisfies Hypothesis $(H1)$. Let $0\le V\in
L_{loc}^1({\mathbb R}^N)$. Then the following hold:
\begin{enumerate}
\item[(i)] If $\lambda_1(L+V)>-\infty$, then there exists a
nonnegative weak solution \\$u\in C([0,\infty),L^2_\mu)$ of $(P_V)$
satisfying
\begin{equation}\label{eq222}
\|u(t)\|_{L^2_\mu}\le Me^{\omega t}\|u_0\|_{L^2_\mu},\quad t\ge
0
\end{equation}
for some constants $M\ge 1$ and $\omega \in {\mathbb R}$.
 \item[(ii)] If
$\lambda_1(L+V)=-\infty$, then for any $0\le u_0\in
L^2_\mu\setminus \{0\},$ there exists no nonnegative weak solution of $(P_V)$
satisfying \eqref{eq222}.
\end{enumerate}
\end{theorem}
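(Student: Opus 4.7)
The plan is to follow the Cabr\'e--Martel scheme exactly as in \cite[Theorem 2.1]{gol-gol-rhan}, with the modification that the strict positivity of the semigroup is now only guaranteed away from the origin (Lemma \ref{lm:strictly-positivity}). The tools from the excerpt that make this go through are Proposition \ref{pr:lor-H0} (domain inclusion, integration by parts, smoothing), the analyticity of $T(t)$ on $L^2_\mu$, the density of $C_c^\infty(\Omega)$ in $H^1_\mu$, and Lemma \ref{lm:strictly-positivity}.

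For assertion (i) I would use the standard truncation. Set $V_n=\min(V,n)$. Since $V_n$ is bounded, $L+V_n$ is a bounded self-adjoint perturbation of $(L,D(L))$ and hence generates an analytic semigroup $T_n(t)$ on $L^2_\mu$ which is positivity preserving (by the Trotter product formula applied to the positivity preserving $T(t)$ and the multiplication semigroup $e^{tV_n}$). Because $V_n\le V$ we have $\lambda_1(L+V_n)\ge \lambda_1(L+V)>-\infty$, and the min-max formula combined with self-adjointness gives the uniform bound $\|T_n(t)\|_{L^2_\mu}\le e^{-\lambda_1(L+V) t}$. Define $u_n(t)=T_n(t)u_0$. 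The sequence $u_n$ is monotone non-decreasing in $n$ (by the positivity of $T_n$ together with $V_{n+1}\ge V_n$ and Duhamel) and uniformly bounded in $L^2_\mu$ on finite time intervals. Monotone convergence then produces $u(t)=\lim_n u_n(t)$ in $L^2_\mu$, which inherits the estimate \eqref{eq222}. The verification that $u$ is a weak solution in the sense of \eqref{eq:weak-equation} reduces to passing to the limit in
\[
\int_0^T\!\!\int_{\R^N} u_n(-\partial_t\phi-L\phi)\,d\mu\,dt-\int_{\R^N} u_0\phi(\cdot,0)\,d\mu=\int_0^T\!\!\int_{\R^N} V_n u_n \phi\,d\mu\,dt,
\]
for $\phi\in W^{2,1}_{\loc}$ of compact support vanishing at $T$; the right-hand side passes to the limit by monotone convergence, which also forces $Vu\in L^1(B_R\times(0,T),d\mu\,dt)$.

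For assertion (ii) I would argue by contradiction: assume $u$ is a nonnegative weak solution satisfying \eqref{eq222}. By Lemma \ref{lm:strictly-positivity}, for every compact $K\Subset\Omega$ there exists $c(t)>0$ such that $u(x,t)\ge c(t)\int_K u_0\,d\mu>0$ for $x\in K$. Let $\psi_n\in D(L)$ be an $L^2_\mu$-normalized eigenfunction of $L+V_n$ associated with $\lambda_1(L+V_n)$; by elliptic regularity we may truncate and approximate $\psi_n$ by test functions in $C_c^\infty(\Omega)$ (using the density proved at the start of Section 2). Plugging $\phi(x,t)=e^{-\lambda_1(L+V_n)(T-t)}\psi_n(x)\eta(t)$ into the weak formulation (after suitable cutoff in $x$ and standard approximation), one obtains
\[
\int_{\R^N} u(x,T)\psi_n(x)\,d\mu\;\ge\;e^{-\lambda_1(L+V_n)T}\int_{\R^N} u_0\psi_n\,d\mu.
\]
Using strict positivity of $u$ on compacts of $\Omega$ and the fact that $\psi_n\ge 0$ can be chosen non-trivial on any such compact, the right-hand side blows up as $n\to\infty$ when $\lambda_1(L+V)=-\infty$ (since $\lambda_1(L+V_n)\to -\infty$), while the left-hand side stays controlled by \eqref{eq222} and $\|\psi_n\|_{L^2_\mu}=1$, yielding the desired contradiction.

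The main obstacle compared with \cite{gol-gol-rhan} is that strict positivity in Lemma \ref{lm:strictly-positivity} is restricted to compact subsets of $\Omega=\R^N\setminus\{0\}$ rather than of $\R^N$, because the drift coefficient $\nabla\mu/\mu$ is only locally regular on $\Omega$. This forces the nonexistence argument to be carried out with test functions and eigenfunction approximations supported in $\Omega$; the density of $C_c^\infty(\Omega)$ in $H^1_\mu$, together with the smoothing property $T(t)L^2_\mu\subset D(L)\subset H^1_\mu$ from Proposition \ref{pr:lor-H0}, is exactly what makes this compatible with the Rayleigh quotient defining $\lambda_1(L+V)$. Everything else is a routine adaptation of the proof in \cite[Theorem 2.1]{gol-gol-rhan}.
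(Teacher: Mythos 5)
Your overall framework is the one the paper intends: Theorem \ref{th:cabr-mart-H1} is obtained by rerunning the Cabr\'e--Martel scheme of \cite[Theorem 2.1]{gol-gol-rhan} once Proposition \ref{pr:lor-H0}, Lemma \ref{lm:strictly-positivity} and the density of $C_c^\infty(\Omega)$ in $H^1_\mu$ are in place, and your part (i) (truncation $V_n=\min(V,n)$, monotone limit of $T_n(t)u_0$, passage to the limit in \eqref{eq:weak-equation}) is exactly that argument.

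Part (ii), however, replaces the Cabr\'e--Martel nonexistence argument by an eigenfunction duality, and this substitute has two genuine gaps. First, under $(H1)$ nothing guarantees that $\lambda_1(L+V_n)$ is an eigenvalue of $-(L+V_n)$ on $L^2_\mu$ (no compactness of the resolvent is assumed), so the normalized eigenfunctions $\psi_n$ you invoke need not exist; at best you have approximate minimizers of the Rayleigh quotient. Second, and more seriously, your contradiction needs $e^{-\lambda_1(L+V_n)T}\int u_0\psi_n\,d\mu$ to blow up, hence $\int u_0\psi_n\,d\mu$ to stay bounded away from $0$ uniformly in $n$; nothing prevents $\psi_n$ from concentrating near the origin or at infinity where $u_0$ may vanish, and Lemma \ref{lm:strictly-positivity} gives positivity of $u(\cdot,t)$ for $t>0$, not of $u_0$, so it cannot repair this. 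The argument the paper relies on runs in the opposite direction: assuming a nonnegative weak solution satisfying \eqref{eq222} exists, fix $0<t_0<t_1$, use Lemma \ref{lm:strictly-positivity} to get $u\geq\delta>0$ on $K\times[t_0,t_1]$ for every compact $K\subset\Omega$, and test the equation against $\varphi^2/u$ (equivalently, estimate $\frac{d}{dt}\int\varphi^2\log u\,d\mu$) for $\varphi\in C_c^\infty(\Omega)$; the inequality $2\varphi\,\frac{\nabla\varphi\cdot\nabla u}{u}\leq|\nabla\varphi|^2+\varphi^2\frac{|\nabla u|^2}{u^2}$ together with the bound \eqref{eq222} on the logarithmic term then yields $\int|\nabla\varphi|^2\,d\mu-\int V\varphi^2\,d\mu\geq -C\int\varphi^2\,d\mu$ with $C$ independent of $\varphi$, i.e. $\lambda_1(L+V)>-\infty$ (the density of $C_c^\infty(\Omega)$ in $H^1_\mu$ is precisely what allows the Rayleigh quotient to be computed over such $\varphi$ only). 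This contradicts $\lambda_1(L+V)=-\infty$; you should replace your part (ii) by this argument.
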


\section{The Hardy inequality}


Let  $d\mu$ be a positive measure (not necessary a probability measure) with density $\mu(x)$.
Let us recall the definition of $c_{0,\mu}$ and the potential $U$. We
set
$$U_\mu :=\frac{1}{4}\left|\frac{\nabla \mu}{\mu}\right|^2-\frac{1}{2}\frac{\Delta \mu}{\mu}$$
and
$$c_{0,\mu}:=\liminf_{x\to 0}\left( c_0(N)-|x|^2U_\mu\right).$$
Consider
\begin{align*}
 &U:=U_\mu-\frac{1}{|x|^2}\limsup_{x\to 0}|x|^2U_\mu .
\end{align*}
So, we have
\begin{align}\label{u}
 &U=U_\mu+\frac{c_{0,\mu}}{|x|^2}-\frac{c_0(N)}{|x|^2}.
\end{align}

We start by proving of the following improved Hardy inequality.

\begin{proposition}\label{pr:hardy-gen00}
Assume $\rm i),ii)$ of $(H2)$. Then, for any $\varphi\in C_c^\infty(\R^N)$, the following inequality holds
\begin{equation}\label{eq:hardy-gen00-1}
c_{0,\mu}\int_{\R^N}\frac{\varphi^2}{|x|^2}d\mu\leq\int_{\R^N}|\nabla \varphi |^2\,d\mu
+\int_{\R^N} U\varphi^2\,d\mu .
\end{equation}
\end{proposition}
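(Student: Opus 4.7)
The plan is to exploit the unitary equivalence between the weighted Hardy inequality on $L^2_\mu$ and the classical Hardy inequality on $L^2(\R^N)$, via the ground-state substitution $\psi:=\sqrt{\mu}\,\varphi$ already highlighted in the introduction. Since $\varphi\in C_c^\infty(\R^N)$ and $\sqrt{\mu}\in H^1_{loc}(\R^N)$ by condition i) of $(H2)$, the function $\psi$ lies in $H^1(\R^N)$ with compact support, so that the classical Hardy inequality
\[
c_0(N)\int_{\R^N}\frac{\psi^2}{|x|^2}\,dx\leq \int_{\R^N}|\nabla\psi|^2\,dx
\]
is available to it.

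The heart of the proof is the identity
\[
\int_{\R^N}|\nabla\varphi|^2\,d\mu=\int_{\R^N}|\nabla\psi|^2\,dx-\int_{\R^N}U_\mu\,\varphi^2\,d\mu.
\]
To obtain it, I would expand $|\nabla\psi|^2=\mu|\nabla\varphi|^2+\varphi\,\nabla\mu\cdot\nabla\varphi+\frac{|\nabla\mu|^2}{4\mu}\varphi^2$, recognize the middle term as $\frac{1}{2}\nabla\mu\cdot\nabla(\varphi^2)$, and integrate by parts to convert it into $-\frac{1}{2}\int\varphi^2\Delta\mu\,dx$. Collecting terms and using the definition of $U_\mu$ yields the displayed identity. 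Next, invoking formula \eqref{u} to write $U=U_\mu+(c_{0,\mu}-c_0(N))/|x|^2$ and observing that $\int\varphi^2/|x|^2\,d\mu=\int\psi^2/|x|^2\,dx$ since $\psi^2=\mu\varphi^2$, the right-hand side of \eqref{eq:hardy-gen00-1} collapses to
\[
\int_{\R^N}|\nabla\psi|^2\,dx+(c_{0,\mu}-c_0(N))\int_{\R^N}\frac{\psi^2}{|x|^2}\,dx.
\]
Applying classical Hardy to the first term and combining the two coefficients of $\int\psi^2/|x|^2\,dx$ produces exactly $c_{0,\mu}\int\varphi^2/|x|^2\,d\mu$, which is the desired left-hand side.

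The only delicate point is justifying the integration by parts, since $\mu$ is allowed to vanish and need not be smooth. However, hypothesis i) of $(H2)$ supplies exactly the borderline regularity required: $\sqrt{\mu}\in H^1_{loc}$ means $|\nabla\mu|^2/\mu=4|\nabla\sqrt{\mu}|^2\in L^1_{loc}$, so that $\nabla\mu=2\sqrt{\mu}\,\nabla\sqrt{\mu}\in L^1_{loc}$ by Cauchy--Schwarz, while $\Delta\mu\in L^1_{loc}$ allows the pairing $\int\nabla\mu\cdot\nabla(\varphi^2)\,dx=-\int\varphi^2\Delta\mu\,dx$ to be interpreted in the standard distributional sense against the smooth compactly supported test function $\varphi^2$. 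In particular, the intermediate quantities $\int\frac{|\nabla\mu|^2}{4\mu}\varphi^2\,dx$ and $\int\varphi^2\Delta\mu\,dx$ are finite, no regularization of $\mu$ near its zero set is needed, and the computation proceeds directly.
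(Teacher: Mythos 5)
Your proof is correct and follows essentially the same route as the paper: the ground-state substitution $\psi=\sqrt{\mu}\,\varphi$, the expansion of $|\nabla\psi|^2$ together with an integration by parts (justified by $\mu^{1/2}\in H^1_{loc}$ and $\Delta\mu\in L^1_{loc}$) producing $\int U_\mu\varphi^2\,d\mu$, and the relation \eqref{u} between $U$ and $U_\mu$. The only difference is cosmetic: you invoke the classical Hardy inequality for compactly supported $H^1$ functions as a black box, whereas the paper rederives it for $\psi$ on the spot via the representation $\psi(x)=-\int_1^\infty\frac{d}{dt}\,\psi(tx)\,dt$ and Minkowski's integral inequality.
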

\begin{proof}
One has
\begin{align*}
\varphi (x)\sqrt {\mu (x)}&=-\int_1^\infty \frac{d}{dt}\left( \varphi(tx)\sqrt{\mu(tx)} \right)\,dt\\
&=-\int_1^\infty x\cdot \left( \nabla \varphi(tx)+\frac{1}{2}\varphi(tx)\frac{\nabla \mu(tx)}{\mu(tx)} \right)\sqrt {\mu(tx)}\,dt.
\end{align*}
By Minkowski's inequality for integrals and by a change of variables we have
\begin{align*}
\left\|\frac{\varphi }{|x|} \right\|_{L^2_\mu}&=
\left\|\frac{\varphi \sqrt {\mu (x)}}{|x|} \right\|_{L^2}\\
&\leq \left\|
\int_1^\infty \left|
  \frac{x}{|x|}\cdot \left( \nabla \varphi(tx)+\frac{1}{2}\varphi(tx)\frac{\nabla \mu(tx)}{\mu(tx)} \right)
    \sqrt{ \mu(tx)}
    \right|
    \,dt
\right\|_{L^2}\\
& \leq
\int_1^\infty
\left\|
\frac{x}{|x|}\cdot \left( \nabla \varphi(tx)+\frac{1}{2}\varphi(tx)\frac{\nabla \mu(tx)}{\mu(tx)} \right)\sqrt{\mu(tx)}
\right\|_{L^2}\,
dt\\
& =\int_1^\infty
  \left(\int_{\R^N}
  \left| \nabla\varphi(tx)+\frac{1}{2}\varphi(tx)\frac{\nabla \mu(tx)}{\mu(tx)}\right|^2\mu(tx)\,
  dx\right)^{\frac{1}{2}}\,
  dt\\
& =\int_1^\infty t^{-\frac{N}{2}}\,dt
  \left\|\nabla \varphi+\frac{1}{2}\varphi\frac{\nabla \mu}{\mu}\right\|_{L^2_{\mu}}\\
& =\frac{1}{\sqrt {c_0(N)}}
  \left\|\nabla \varphi+\frac{1}{2}\varphi\frac{\nabla \mu}{\mu}\right\|_{L^2_{\mu}}.
\end{align*}

Hence,
\begin{align*}
c_0(N)\int _{\R^N}\frac{\varphi^2}{|x|^2}\,d\mu &\le \int _{\R^N}|\nabla\varphi |^2\,d\mu
    +\frac{1}{4}\int_{\R^N}\varphi ^2  \left|\frac{\nabla\mu}{\mu} \right|^2\,d\mu
     +\int_{\R^N}\nabla\varphi\cdot\nabla\mu\frac{\varphi}{\mu}\,d\mu\\
&=\int _{\R^N}|\nabla\varphi |^2\,d\mu
    +\frac{1}{4}\int_{\R^N}\varphi ^2  \left|\frac{\nabla\mu}{\mu} \right|^2\,d\mu
    +\int_{\R^N}\frac{1}{2}\nabla\varphi^2 \cdot\nabla\mu\,  dx\\
&=\int _{\R^N}|\nabla\varphi |^2\,d\mu
    +\frac{1}{4}\int_{\R^N}\varphi ^2  \left|\frac{\nabla\mu}{\mu} \right|^2\,d\mu
    -\frac{1}{2}\int_{\R^N}\varphi^2 \Delta \mu\,  dx\\
&=\int _{\R^N}|\nabla\varphi |^2\,d\mu
    +\frac{1}{4}\int_{\R^N}\varphi ^2  \left|\frac{\nabla\mu}{\mu} \right|^2\,d\mu
    -\frac{1}{2}\int_{\R^N}\varphi^2 \frac{\Delta \mu}{\mu}  \,d\mu\\
&=\int _{\R^N}|\nabla\varphi |^2\,d\mu
    +\int_{\R^N}U_\mu\varphi ^2 \, d\mu.\\
\end{align*}
Then, \eqref{eq:hardy-gen00-1} follows from the relation
\[
U_\mu=U+\frac{c_0(N)-c_{0,\mu}}{|x|^2}.
\]
\end{proof}

We recall, under assumption $(i)$ of $(H2)$, that $C_c^\infty(\z)$ is dense in $H_\mu^1$.
If moreover $U$ is bounded from above, the result below is a direct consequence
of (\ref{eq:hardy-gen00-1}).

\begin{corollary}\label{pr:hardy-gen-bound0}
Assume $\rm i), ii)$ of $(H2)$ and assume that there
exists $C_{\mu}\in \R$ such that $U\leq C_\mu$.
Then, for any $\varphi\in H^1_\mu$,
\begin{equation*}
c_{0,\mu}\int_{\R^N}\frac{\varphi^2}{|x|^2}\,d\mu\leq\int_{\R^N}|\nabla \varphi |^2\,d\mu+
{C_\mu}\int_{\R^N} \varphi^2\,d\mu.
\end{equation*}
\end{corollary}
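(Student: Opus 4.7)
The plan is to combine Proposition \ref{pr:hardy-gen00} with the pointwise bound $U\leq C_\mu$ on the error term, and then promote the inequality from test functions to all of $H^1_\mu$ by a density argument.

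First I would start from the improved inequality \eqref{eq:hardy-gen00-1}: for every $\varphi\in C_c^\infty(\R^N)$,
\[
c_{0,\mu}\int_{\R^N}\frac{\varphi^2}{|x|^2}\,d\mu\leq \int_{\R^N}|\nabla\varphi|^2\,d\mu+\int_{\R^N}U\varphi^2\,d\mu.
\]
Using the assumption $U\leq C_\mu$ and the nonnegativity of $\varphi^2$, the last integral is bounded above by $C_\mu\int_{\R^N}\varphi^2\,d\mu$. This gives the desired inequality for every $\varphi\in C_c^\infty(\R^N)$.

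The remaining task is the passage from $C_c^\infty(\R^N)$ to $H^1_\mu$. Given $\varphi\in H^1_\mu$, we invoke the density of $C_c^\infty(\R^N)$ in $H^1_\mu$ (guaranteed by hypothesis $\rm i)$ of $(H2)$, as discussed right after Proposition \ref{pr:lor-H0}) to choose a sequence $\varphi_n\in C_c^\infty(\R^N)$ with $\varphi_n\to\varphi$ in $H^1_\mu$. The two terms on the right-hand side then converge, since $\varphi\mapsto\int|\nabla\varphi|^2\,d\mu$ and $\varphi\mapsto\int\varphi^2\,d\mu$ are continuous on $H^1_\mu$. For the left-hand side, after extracting a subsequence we may assume $\varphi_n\to\varphi$ pointwise $d\mu$-almost everywhere, and Fatou's lemma applied to the nonnegative integrands $\varphi_n^2/|x|^2$ yields
\[
\int_{\R^N}\frac{\varphi^2}{|x|^2}\,d\mu\leq\liminf_{n\to\infty}\int_{\R^N}\frac{\varphi_n^2}{|x|^2}\,d\mu.
\]
Combining these observations produces the claimed inequality in the case $c_{0,\mu}\geq 0$; when $c_{0,\mu}<0$ the left-hand side is nonpositive (or $-\infty$) and the inequality is trivial.

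I do not foresee a real obstacle: the key analytic work has already been done in Proposition \ref{pr:hardy-gen00}, so the only subtlety is that Fatou's lemma must be used on the singular term because mere $H^1_\mu$-convergence does not give $L^2(d\mu/|x|^2)$-convergence of $\varphi_n$ to $\varphi$. Beyond that, the argument is entirely routine.
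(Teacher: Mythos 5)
Your proposal is correct and follows essentially the same route as the paper, which simply declares the corollary a direct consequence of Proposition \ref{pr:hardy-gen00} together with the bound $U\leq C_\mu$ and the density of $C_c^\infty(\R^N)$ in $H^1_\mu$ under assumption $\rm i)$ of $(H2)$. Your explicit use of Fatou's lemma on the singular term is the natural way to fill in the density step that the paper leaves implicit.
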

If instead of the boundedness from above of $U$ in $\R^N$, we assume $U$ bounded only for $|x|$ large enough
(that is $(H2')$),
then the following result holds.
\begin{theorem}\label{pr:h20}
Let us assume that Hypothesis  $(H2')$ holds,
then for every $c<c_{0,\mu}$ there exists $C_\mu$ such that
for any $\varphi\in H^1_\mu$ the weighted Hardy inequality holds
\begin{equation}\label{in1}
c\int_{\R^N}\frac{\varphi^2}{|x|^2}\,d\mu\leq\int_{\R^N}|\nabla \varphi |^2
d\mu+{C_\mu}\int_{\R^N} \varphi^2\,d\mu.
\end{equation}
\end{theorem}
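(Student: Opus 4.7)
The plan is to start from Proposition~\ref{pr:hardy-gen00}, which under conditions i), ii) of $(H2)$ already supplies
\[
c_{0,\mu}\int_{\R^N}\frac{\varphi^2}{|x|^2}\,d\mu\le \int_{\R^N}|\nabla\varphi|^2\,d\mu+\int_{\R^N}U\varphi^2\,d\mu,\qquad \varphi\in C_c^\infty(\R^N),
\]
and then to absorb $\int U\varphi^2\,d\mu$ into an arbitrarily small multiple of the weighted Hardy term, paying only a $\int\varphi^2\,d\mu$ remainder. The pivotal remark is that, by the very definition (\ref{u}) of $U$,
\[
\limsup_{x\to 0}|x|^2 U(x)=\limsup_{x\to 0}|x|^2 U_\mu(x)-\limsup_{y\to 0}|y|^2 U_\mu(y)=0,
\]
so for every $\eta>0$ there exists $R_\eta>0$ with $U(x)\le \eta/|x|^2$ on $B_{R_\eta}\setminus\{0\}$.

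Combining this near-origin control with iii) of $(H2')$, which yields a constant $M_\eta$ such that $U\le M_\eta$ on $\R^N\setminus B_{R_\eta}$, a pointwise splitting of $\R^N$ into $B_{R_\eta}$ and its complement gives
\[
\int_{\R^N}U\varphi^2\,d\mu \le \eta\int_{\R^N}\frac{\varphi^2}{|x|^2}\,d\mu+M_\eta^+\int_{\R^N}\varphi^2\,d\mu,
\]
where $M_\eta^+=\max(M_\eta,0)$. Inserting this in the inequality of Proposition~\ref{pr:hardy-gen00} and rearranging yields
\[
(c_{0,\mu}-\eta)\int_{\R^N}\frac{\varphi^2}{|x|^2}\,d\mu\le \int_{\R^N}|\nabla\varphi|^2\,d\mu+M_\eta^+\int_{\R^N}\varphi^2\,d\mu.
\]
For a prescribed $c<c_{0,\mu}$, the choice $\eta:=c_{0,\mu}-c>0$ and $C_\mu:=M_\eta^+$ gives (\ref{in1}) on $C_c^\infty(\R^N)$. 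The extension to $H^1_\mu$ is then via the same density argument invoked for Corollary~\ref{pr:hardy-gen-bound0}, which rests on $\mu^{1/2}\in H^1_{loc}(\R^N)$, i.e.\ condition i) of $(H2)$.

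The main subtlety I anticipate is that under $(H2')$ alone the potential $U$ has no lower control near the origin, so any attempt to bound $|U|$ pointwise is doomed; it is crucial that only the positive part of $U$ has to be tamed, which is exactly what the cancellation built into the definition of $U$ achieves. Moreover, because $\limsup_{x\to 0}|x|^2 U(x)=0$ lets $\eta$ be as small as we wish but never equal to $0$, the threshold $c=c_{0,\mu}$ cannot be reached by this method in the absence of the stronger logarithmic decay iv) of $(H2)$, in agreement with the strict inequality $c<c_{0,\mu}$ in the statement.
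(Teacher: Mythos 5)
Your argument coincides with the paper's own proof: both start from Proposition~\ref{pr:hardy-gen00}, use $\limsup_{x\to 0}|x|^2U=0$ to bound $U\le \eta/|x|^2$ near the origin and $U\le C_\mu$ outside (condition iii) of $(H2')$), absorb the small Hardy term, and set $\eta=c_{0,\mu}-c$. The proof is correct and essentially identical to the one in the paper.
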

\begin{proof}
Since $\limsup_{x\to 0}|x|^2U=0$, it follows that for every $\varepsilon>0$
there exists $R_\varepsilon>0$ such that
$U\leq \frac{\varepsilon}{|x|^2}$ for all
$x\in B_{R_\varepsilon}$. Moreover, there exists $C_\mu$ depending on $R_\varepsilon$
such that $U\leq C_\mu$ for every $x\in B^c_{R_\varepsilon}.$
Then, by Proposition \ref{pr:hardy-gen00}, we have
\begin{align*}
c_{0,\mu}\int_{\R^N}\frac{\varphi^2}{|x|^2}\,d\mu
 & \leq\int_{\R^N}|\nabla \varphi |^2\,d\mu+\int_{\R^N} U\varphi^2\,d\mu\\
&  \leq\int_{\R^N}|\nabla \varphi |^2\,d\mu+\varepsilon\int_{B_{R_\varepsilon}}
\frac{\varphi^2}{|x|^2}\,d\mu
    +C_\mu\int_{B^c_{R_\varepsilon}} \varphi^2\,d\mu\\
& \leq\int_{\R^N}|\nabla \varphi |^2\,d\mu
    +\varepsilon\int_{R^N} \frac{\varphi^2}{|x|^2}\,d\mu+C_\mu\int_{\R^N} \varphi^2\,d\mu.
\end{align*}
The result follows by taking $c=c_{0,\mu}-\varepsilon$.
\end{proof}

We look now for weaker conditions with respect to the boundedness from above for $U$  in $\R^N$ in order to get (\ref{in1}). To this purpose, we have to consider improved Hardy's inequalities.

The first step is to state a relation between the weighted Hardy inequality
and a special improved Hardy's inequality.

\begin{lemma}\label{th:equivalence-hary}
Assume $\rm i), ii)$ of $(H2)$, and the improved Hardy inequality
 \begin{equation}\label{eq:2}
 c_0(N)\int \frac{u^2}{|x|^2}\,dx-\int |\nabla u |^2\,dx+\int U(x)u^2\,dx\leq C_\mu\int_{\R^N} u^2\,dx, \  u\in C_c^\infty(\R^N).
\end{equation}
Then, the weighted Hardy inequality holds
 \begin{equation}\label{whic}
c_{0,\mu}\int_{\R^N}\frac{\varphi^2}{|x|^2}\,d\mu\leq\int_{\R^N}|\nabla \varphi |^2
\,d\mu+{C_\mu}\int_{\R^N} \varphi^2\,d\mu,
 \  \varphi \in C_c^\infty(\R^N).
\end{equation}

\end{lemma}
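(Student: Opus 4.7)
The strategy is the ground-state transformation $u = \varphi\sqrt{\mu}$, which converts the flat inequality \eqref{eq:2} into the weighted one by essentially reversing the unitary equivalence $L+V = THT^{-1}$ recalled in the introduction. The starting point is the identity, valid for $\varphi\in C_c^\infty(\R^N)$,
\[
\int_{\R^N}\bigl|\nabla(\varphi\sqrt{\mu})\bigr|^2\,dx = \int_{\R^N}|\nabla\varphi|^2\,d\mu + \int_{\R^N} U_\mu\varphi^2\,d\mu,
\]
which is exactly the chain of equalities carried out at the end of the proof of Proposition \ref{pr:hardy-gen00}: expand $\nabla(\varphi\sqrt{\mu}) = \sqrt{\mu}\,\nabla\varphi + \tfrac{\varphi}{2\sqrt{\mu}}\nabla\mu$, rewrite the cross term as $\tfrac12\int\nabla(\varphi^2)\cdot\nabla\mu\,dx = -\tfrac12\int\varphi^2\Delta\mu\,dx$ (legitimate by $\Delta\mu\in L^1_{loc}(\R^N)$ from (H2)(i)), and re-express everything back in $d\mu$.

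Next I would substitute $u=\varphi\sqrt{\mu}$ into the hypothesized improved Hardy inequality. The three $L^2(dx)$-type integrals translate directly via $u^2\,dx = \varphi^2\,d\mu$, while the Dirichlet integral $\int|\nabla u|^2\,dx$ is replaced by the ground-state identity above. Consequently \eqref{eq:2} rewrites as
\[
c_0(N)\int_{\R^N}\frac{\varphi^2}{|x|^2}\,d\mu - \int_{\R^N}|\nabla\varphi|^2\,d\mu + \int_{\R^N}(U-U_\mu)\varphi^2\,d\mu \leq C_\mu\int_{\R^N}\varphi^2\,d\mu.
\]
The defining relation \eqref{u} gives $U-U_\mu = (c_{0,\mu}-c_0(N))/|x|^2$, so the middle integral contributes $(c_{0,\mu}-c_0(N))\int\varphi^2/|x|^2\,d\mu$; the two $c_0(N)$-pieces cancel, and one lands precisely on \eqref{whic}.

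The delicate point is that $u=\varphi\sqrt{\mu}$ need not lie in $C_c^\infty(\R^N)$: hypothesis (H2)(i) only provides $\sqrt{\mu}\in H^1_{loc}(\R^N)$, so $u$ belongs at best to $H^1(\R^N)$ with compact support. The main obstacle, accordingly, is to justify applying \eqref{eq:2} to such a non-smooth $u$. I would handle this by mollification: approximate $u$ by $u_n = \rho_n\ast(\varphi\sqrt{\mu})\in C_c^\infty(\R^N)$, apply \eqref{eq:2} to each $u_n$, and pass to the limit, using $H^1(dx)$-convergence for the gradient term, dominated convergence on a fixed compact neighborhood of the support of $\varphi$ for the lower-order and $U$-terms, and Fatou's lemma on the singular $|x|^{-2}$-integral appearing on the left-hand side, which suffices because \eqref{eq:2} is a one-sided estimate.
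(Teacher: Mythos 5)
Your argument is essentially the paper's own proof: the same ground-state substitution $u=\varphi\sqrt{\mu}$, the same expansion of $\int|\nabla(\varphi\sqrt{\mu})|^2\,dx$ with the cross term integrated by parts against $\Delta\mu\in L^1_{loc}(\R^N)$, and the same use of the relation \eqref{u} to convert $U-U_\mu$ into $(c_{0,\mu}-c_0(N))|x|^{-2}$ so that the $c_0(N)$-terms cancel. The only difference is that you spell out a mollification argument for applying \eqref{eq:2} to the merely $H^1$, compactly supported function $u=\varphi\sqrt{\mu}$, a step the paper dispatches with the phrase that \eqref{eq:2} ``holds by density for such a function $u$.''
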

\begin{proof} Let $\varphi\in C_c^{\infty}(\R^N)$ and set $u:=\varphi\sqrt \mu\in H^1(\R^N)$ with compact support.
By  \eqref{eq:2}, which holds by density for such a function $u$, integrating by parts and recalling the expression \eqref{u} for $U$, one obtains
\begin{align*}
c_0(N)\int_{\R^N}\frac{u^2}{|x|^2}\,dx&=c_0(N)\int_{\R^N}\frac{\varphi^2}{|x|^2}\,d\mu\\
&\leq \int_{\R^N}|\nabla (\varphi\sqrt{\mu}) |^2\,dx-\int_\z U(x)\varphi^2\,d\mu+{C_\mu}\int_{\R^N} \varphi^2\,d\mu\\
&=\int_\z|\nabla\varphi|^2\,d\mu+\frac{1}{2}\int_\z\nabla\varphi^2\nabla{\mu}\,dx+\frac{1}{4}\int_\z\varphi^2\left|\frac{\nabla\mu}{\mu}\right|^2\,d\mu\\
&\quad-\int_\z U_\mu\varphi^2\,d\mu+c_0(N)\int_\z\frac{\varphi^2}{|x|^2}\,d\mu\\
&\quad-c_{0,\mu}\int_\z\frac{\varphi^2}{|x|^2}\,d\mu+{C_\mu}\int_{\R^N} \varphi^2\,d\mu\\
& =\int_{\R^N}\left|\nabla \varphi \right|^2\,d\mu+{C_\mu}\int_{\R^N} \varphi^2\,d\mu\\
&\quad +c_0(N)\int_\z\frac{\varphi^2}{|x|^2}\,d\mu-c_{0,\mu}\int_\z\frac{\varphi^2}{|x|^2}\,d\mu.
\end{align*}
Then, inequality \eqref{whic} follows.
\end{proof}

Now, our aim is to prove \eqref{eq:2}.
Brezis and V\'azquez in \cite{bre-vaz} proved the following inequality
\[
\int_D |\nabla u|^2\,dx\geq c_0\int_D \frac{u^2}{|x|^2}\,dx+K\|u\|^2_{L^q(D)}
\]
with $q<\frac{2N}{N-2}$, for a bounded domain $D$ and for every $u\in H^1_0(D)$. From this, by H\"older's inequality, it follows an inequality of type
\[
 \int_D |\nabla u|^2\,dx\geq  c_0\int_D \frac{u^2}{|x|^2}\,dx+b\int_D Uu^2\,dx
\]
with potential $U$ belonging to $L_{loc}^p(\R^N)$ for $p>\frac{N}{2}$.
This gives us the desired result \eqref{whic},
but forces us  to suppose $U_\mu \in L_{loc}^{N/2+\varepsilon}(\R^N)$.

Therefore, in order to prove \eqref{eq:2}, we will refer to the following improved Hardy inequality,
see \cite{mus}, \cite{adi-cha-ram}.

\bigskip

\begin{theorem}\label{th:improved-hardy}
For any $u \in C_c^{\infty}(B_1)$ the following inequality holds
\begin{equation*}
\int_{B_1}\left|\nabla u\right|^2\,dx\geq c_0\int_{B_1}
\frac{u^2}{|x|^2}\,dx+\frac{1}{4}\int_{B_1}\frac{u^2}{|x|^2|\log|x||^2}\,dx.
\end{equation*}
\end{theorem}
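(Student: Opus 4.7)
The plan is to use the Emden--Fowler-type substitution $u(x) = |x|^{-(N-2)/2} v(x)$ in order to factor out the ground state of the Hardy operator $-\Delta - c_0(N)/|x|^2$, and thus reduce matters to a one-dimensional Hardy inequality on the half-line with sharp constant $1/4$.

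First I would compute, for $u\in C_c^\infty(B_1)$ and with $r=|x|$,
\begin{equation*}
|\nabla u|^2 = \frac{(N-2)^2}{4} r^{-N} v^2 \; -\; (N-2)\, r^{-(N-1)}\, v\, \partial_r v \; +\; r^{-(N-2)}|\nabla v|^2 .
\end{equation*}
The middle (cross) term equals $-\tfrac{N-2}{2} r^{-(N-1)} \partial_r(v^2)$, and in polar coordinates ($dx=r^{N-1}dr\,d\sigma$) the factor $r^{-(N-1)}$ cancels the Jacobian, so integration in $r$ on $(0,1)$ produces only boundary contributions. Since $u$ has compact support in $B_1$ we have $v(1,\sigma)=0$, and since $u$ is smooth at the origin $v(r,\sigma)=r^{(N-2)/2}u(r,\sigma)\to 0$ as $r\to 0^+$; thus the cross term vanishes and
\begin{equation*}
\int_{B_1}|\nabla u|^2\,dx \;=\; c_0(N)\int_{B_1}\frac{u^2}{|x|^2}\,dx \;+\; \int_{B_1} r^{-(N-2)} |\nabla v|^2\,dx.
\end{equation*}

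It then suffices to prove the remainder estimate
\begin{equation*}
\int_{B_1} r^{-(N-2)}|\nabla v|^2\,dx \;\geq\; \frac{1}{4}\int_{B_1}\frac{v^2}{r^N|\log r|^2}\,dx,
\end{equation*}
since $v^2/(r^N|\log r|^2) = u^2/(r^2|\log r|^2)$. Using $|\nabla v|^2\geq (\partial_r v)^2$ and switching to polar coordinates, the LHS is bounded below by $\int_{S^{N-1}}\int_0^1 r(\partial_r v)^2\,dr\,d\sigma$ and the RHS equals $\tfrac{1}{4}\int_{S^{N-1}}\int_0^1 v^2/(r|\log r|^2)\,dr\,d\sigma$. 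The change of variables $t=-\log r$, writing $w(t,\sigma):=v(e^{-t},\sigma)$, sends $(0,1)$ to $(0,\infty)$ and transforms these two integrals into $\int_0^\infty (\partial_t w)^2\,dt$ and $\tfrac14\int_0^\infty w^2/t^2\,dt$ respectively. Hence the claim reduces to the classical one-dimensional Hardy inequality on the half-line,
\begin{equation*}
\int_0^\infty (w'(t))^2\,dt \;\geq\; \frac{1}{4}\int_0^\infty \frac{w(t)^2}{t^2}\,dt,
\end{equation*}
which I would justify by the ground-state factorization: the function $\psi(t)=t^{1/2}$ solves $-\psi''=\tfrac{1}{4t^2}\psi$, and writing $w=\psi\eta$ gives $(w')^2 - \tfrac{1}{4t^2}w^2 = \psi^2 (\eta')^2 + (\tfrac12\psi^2\eta^2/t)'$, a manifestly nonnegative integrand plus an exact derivative whose integral vanishes by the boundary behaviour of $w$ inherited from $v$ (namely $w(0)=0$ since $v(1,\cdot)=0$, and exponential decay as $t\to\infty$ since $v(r,\cdot)\sim r^{(N-2)/2}u(0,\cdot)$).

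The only delicate point is the justification of the termwise integrations by parts and the validity of the 1D Hardy inequality for the transformed function $w$; the latter is not compactly supported at $\infty$, but the exponential decay of $v$ at $r=0$ makes $w$ and $w/t$ integrable enough to approximate by $C_c^\infty(0,\infty)$ functions in the relevant norms. Everything else is routine computation.
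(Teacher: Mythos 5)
Your proof is correct, but note that the paper does not prove this statement at all: Theorem \ref{th:improved-hardy} is quoted as a known result with references to the literature (Musina; Adimurthi--Chaudhuri--Ramaswamy), so there is no internal proof to compare against. Your argument is the standard one used in those sources: the ground-state substitution $u=|x|^{-(N-2)/2}v$ kills the cross term (the boundary contributions at $r=1$ and $r=0$ vanish as you say, and for $N=2$ the cross term carries the factor $N-2=0$, so the degenerate case is harmless), and the logarithmic remainder reduces via $t=-\log r$ to the one-dimensional Hardy inequality on the half-line with constant $1/4$. The one point you flag as delicate --- the lack of compact support of $w$ at $t=\infty$ --- is handled most cleanly not by approximation but by integrating your exact identity over $[0,T]$: since $w\equiv 0$ near $t=0$, one gets
\begin{equation*}
\int_0^T (w')^2\,dt-\frac14\int_0^T\frac{w^2}{t^2}\,dt=\int_0^T t\,(\eta')^2\,dt+\tfrac12\,\eta(T)^2\ge 0,
\end{equation*}
and letting $T\to\infty$ gives the claim without any decay hypothesis at infinity (only boundedness of $w$, which holds for all $N\ge 2$). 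With that small repair your write-up is a complete, self-contained proof of a statement the paper only cites, which is a genuine addition rather than a variant of the paper's argument.
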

Now, we suppose that $\mu$ and $U$ satisfy condition $(H2)$. We finally obtain the weighted Hardy inequality \eqref{whic}.

\begin{theorem}\label{thm:3-6}
Assume that Hypothesis $(H2)$ holds.
Then for any $\varphi\in H^1_\mu$,
the following inequality holds
\begin{equation*}
c_{0,\mu}\int_{\R^N}\frac{\varphi^2}{|x|^2}\,d\mu\leq\int_{\R^N}
|\nabla \varphi |^2\,d\mu+{C_\mu}\int \varphi^2\,d\mu.
\end{equation*}
\end{theorem}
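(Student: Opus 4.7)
The plan is to reduce, via Lemma~\ref{th:equivalence-hary}, the desired weighted Hardy inequality to the Euclidean improved Hardy inequality~\eqref{eq:2} on $\R^N$, and then to establish~\eqref{eq:2} by combining the improved classical Hardy inequality of Theorem~\ref{th:improved-hardy} near the origin with the bound on $U$ away from the origin given by $(H2)$~iii). A final density argument, justified by the density of $C_c^\infty(\R^N)$ in $H^1_\mu$ (available under $(H2)$~i) as recalled after Corollary~\ref{pr:hardy-gen-bound0}) together with Fatou's lemma to pass to the limit in the singular term, extends the inequality from test functions to all of $H^1_\mu$.

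To prove~\eqref{eq:2}, I would localize via a smooth quadratic partition of unity. After possibly shrinking $R_0$ so that $R_0<1$, pick $\eta_1,\eta_2\in C^\infty(\R^N)$ with $0\le \eta_i\le 1$, $\eta_1^2+\eta_2^2\equiv 1$, $\eta_1\equiv 1$ on $B_{R_0/2}$, and $\eta_1$ supported in $B_{R_0}$. For $u\in C_c^\infty(\R^N)$, differentiating $\eta_1^2+\eta_2^2\equiv 1$ gives the IMS-type identity
\[
|\nabla u|^2=|\nabla(\eta_1 u)|^2+|\nabla(\eta_2 u)|^2-\bigl(|\nabla\eta_1|^2+|\nabla\eta_2|^2\bigr)u^2,
\]
splitting the Dirichlet integral without cross terms. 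Applying Theorem~\ref{th:improved-hardy} to $\eta_1 u\in C_c^\infty(B_1)$, together with the sharp pointwise bound in $(H2)$~iv), allows the logarithmic remainder to absorb the potential term:
\[
\int_{\R^N} U(\eta_1 u)^2\,dx\le \int_{\R^N}|\nabla(\eta_1 u)|^2\,dx-c_0(N)\int_{\R^N}\frac{(\eta_1 u)^2}{|x|^2}\,dx.
\]
Since the support of $\eta_2$ lies in $\R^N\setminus B_{R_0/2}$, condition $(H2)$~iii) yields a constant $C_1$ with $U\le C_1$ there, and also $|x|^{-2}\le 4/R_0^2$ on that set; hence the $\eta_2$-contribution to $c_0(N)\int u^2/|x|^2\,dx+\int U u^2\,dx$ is controlled by a constant times $\int u^2\,dx$. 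Summing the two pieces using $\eta_1^2+\eta_2^2=1$ and then reinserting the IMS identity to recover $\int|\nabla u|^2\,dx$ yields~\eqref{eq:2} with an explicit $C_\mu$ depending on $R_0$, $\|\nabla\eta_i\|_\infty$, $C_1$, and $N$. Lemma~\ref{th:equivalence-hary} then delivers the weighted inequality for $\varphi\in C_c^\infty(\R^N)$, and density concludes.

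The main obstacle is the precise matching between the constant $\tfrac14$ in $(H2)$~iv) and the coefficient of the logarithmic remainder in Theorem~\ref{th:improved-hardy}: any strictly larger constant in $(H2)$~iv) would break the absorption step, which is exactly why under the weaker hypothesis $(H2')$ one only obtains $c<c_{0,\mu}$ in Theorem~\ref{pr:h20}. A minor technical nuisance is the singularity of $|\log|x||^{-2}$ as $|x|\to 1$; this is neutralized by the initial reduction to $R_0<1$, which ensures that $\eta_1 u$ is supported well inside $B_1$ and that the logarithmic weight is bounded away from zero on $\supp\eta_1$.
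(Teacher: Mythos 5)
Your proposal is correct and follows essentially the same route as the paper: reduction to the Euclidean improved inequality \eqref{eq:2} via Lemma \ref{th:equivalence-hary}, absorption of the potential near the origin by the logarithmic remainder in Theorem \ref{th:improved-hardy} through hypothesis $(H2)$ iv), and the bound from $(H2)$ iii) away from the origin, followed by density. The only (minor) difference is the localization device: you use an IMS partition $\eta_1^2+\eta_2^2=1$, whereas the paper uses a single cutoff $\theta$ and the function $u\sqrt{\theta}$, handling the cross term by integration by parts under the condition $\tfrac12\,|\nabla\theta|^2/\theta-\Delta\theta\le M$; both are equivalent here, and yours avoids that extra condition on the cutoff.
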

\begin{proof}
By Proposition \ref{th:equivalence-hary}, we need to prove that
\begin{equation*}
 c_0(N)\int_{\R^N}\frac{u^2}{|x|^2}\,dx-\int_{\R^N} |\nabla u |^2\,dx+\int_{\R^N}U(x)u^2\,dx\leq C_\mu\int_{\R^N} u^2\,dx,\,\,
 \forall u\in C_c^\infty(\R^N).
\end{equation*}
By Hypothesis
$(H2)$ on $U$, there exists a $R_0\leq 1$ (otherwise one takes $R_0=1$) such that $U\leq
\frac{1}{4}\frac{1}{|x|^2|\log|x||^2}$ in $B_{R_0}$. Then, for $u\in C_c^\infty(\R^N)$, by
Theorem \ref{th:improved-hardy} and a change of variables, one has
\begin{align}\label{eq:musR}
\int_{B_{R_0}}Uu^2\,dx&\leq\frac{1}{4}\int_{B_{R_0}} \frac{u^2}{|x|^2|\log|x||^2}\,dx\nonumber\\
&\leq\frac{R_0^{N-2}}{4}\int_{B_1}\frac{u^2(R_0y)}{|y|^2|\log|y||^2}\,dy\nonumber\\
&\leq R_0^{N}\int_{B_1}|\nabla u(R_0y)|^2\,dy-R_0^{N-2}c_0(N)\int_{B_1}\frac{u^2(R_0y)}{|y|^2}\,dy\nonumber\\
&=\int_{B_{R_0}}|\nabla u|^2 \,dx-c_0(N)\int_{B_{R_0}}\frac{u^2}{|x|^2}\,dx.
\end{align}
Let $u\in C_c^{\infty}(\R^N)$ and  $\theta\in C^\infty(\z)$, $0\leq\theta\leq1$, such that
$\theta=1$ in $B_{\frac{R_0}{2}}$ e $\theta=0$ in $B^c_{R_0}$, and
$\frac{1}{2}\frac{|\nabla \theta|^2}{\theta}-\Delta \theta\leq M$.
Note that such a function exists. For instance, one can consider a translation and a dilatation of the function $\theta_0(s)=ce^{-\frac{1}{1-s^2}}$ for $|s|\leq1$ and equals to 0 for $|s|\geq1$.

Therefore, by \eqref{eq:musR} and Hypotesis $(H2)$, one obtains
\begin{align*}
c_0(N)\int_{\R^N} \frac{u^2}{|x|^2}\,dx&+\int_{\R^N} Uu^2\,dx\\
&\quad =
c_0(N)\int_{B_{R_0}} \frac{u^2}{|x|^2}\theta\, dx+\int_{B_{R_0}} Uu^2\theta\, dx\\
&\qquad +c_0(N)\int_{B^c_{\frac{R_0}{2}}} \frac{u^2}{|x|^2}(1-\theta)\,dx+\int_{B^c_{\frac{R_0}{2}}} Uu^2(1-\theta)\,dx\\
&\quad \leq \int_{B_{R_0}} \left|\nabla \left(u \sqrt \theta\right)\right|^2\,dx
    +\left(\frac{c_0(N)}{(R_0/2)^2}+K  \right)\int_{B^c_{\frac{R_0}{2}}} u^2\,dx\\
&\quad \leq \int_{\R^N} |\nabla u|^2\theta^2\,dx+\int_{\R^N} u^2
    \left( \frac{1}{4}\frac{|\nabla \theta|^2}{\theta}-\frac{1}{2}
\Delta \theta  \right) \,dx\\&\qquad+C\int_{\R^N} u^2\,dx\\
&\quad \leq \int_{\R^N} |\nabla u|^2\,dx+
  \int_{\R^N} u^2\left( \frac{1}{4}\frac{|\nabla \theta|^2}{\theta}-
\frac{1}{2}\Delta \theta \right)\,dx\\&\qquad
  +C\int_{\R^N} u^2\, dx\\
&\quad \leq \int_{\R^N} |\nabla u|^2\,dx+(\frac{M}{2}+C)\int_{\R^N} u^2\, dx.
\end{align*}
\end{proof}


\begin{remark}
For $N_e\in \R$ define
$c_0(N_e):=\left( \frac{N_e-2}{2} \right)^2.$ Take $c_{0,\mu}=c_0(N_e)$ if $c_{0,\mu}\geq 0$ and $N_e=2$ if $c_{0,\mu}<0$. Then, by Theorem \ref{thm:3-6} (resp. Theorem \ref{pr:h20}),
 Inequality \eqref{whi} with constant $c\leq c_0(N_e)$
(resp. $c<c_0(N_e)$) holds provided that $(H2)$ (respectively $(H2')$) is satisfied.
\end{remark}

\section{Optimality of the constant in the weighted Hardy inequality}
In this section we give conditions in order to prove the sharpness of the constant $c$ in \eqref{whi}.

\begin{theorem}
Let us assume Hypothesis $(H3)$.  Then, there exists a function in $H^1_\mu$ for which the weighted Hardy inequality \eqref{whi} does not  hold if $c>c_0(N_0)$.
\end{theorem}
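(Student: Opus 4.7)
The approach is to produce, for each prospective constant $C > 0$, a function in $H^1_\mu$ violating \eqref{whi} when $c > c_0(N_0)$, by perturbing the formal Hardy extremal $|x|^{-(N_0-2)/2}$ into an admissible profile and letting the perturbation degenerate. Fix a cutoff $\eta \in C_c^\infty(\R^N)$ with $\eta \equiv 1$ on $B_{1/2}$ and $\mathop{\mathrm{supp}}\eta \subset B_1$, and consider the family
\begin{equation*}
\varphi_\varepsilon(x) := |x|^{-\alpha_\varepsilon}\eta(x), \qquad \alpha_\varepsilon := \frac{N_0 - 2 - 2\varepsilon}{2}, \qquad \varepsilon \in (0,1).
\end{equation*}
Since $2\alpha_\varepsilon$ and $2\alpha_\varepsilon + 2$ are both strictly less than $N_0$, Hypothesis $(H3)\,\mathrm{ii)}$ forces $|x|^{-2\alpha_\varepsilon}$ and $|x|^{-2\alpha_\varepsilon - 2}$ to lie in $L^1_{loc}(\R^N, d\mu)$, so $\varphi_\varepsilon \in H^1_\mu$ for every such $\varepsilon$.

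A direct calculation on $B_{1/2}$, where $\eta \equiv 1$, together with uniform bounds on the annulus $B_1 \setminus B_{1/2}$, yields, with $I_\varepsilon := \int_{B_{1/2}}|x|^{-(N_0 - 2\varepsilon)}\,d\mu$,
\begin{equation*}
\int_{\R^N}\frac{\varphi_\varepsilon^2}{|x|^2}\,d\mu = I_\varepsilon + O(1), \quad \int_{\R^N}|\nabla\varphi_\varepsilon|^2\,d\mu = \alpha_\varepsilon^2\, I_\varepsilon + O(1), \quad \int_{\R^N}\varphi_\varepsilon^2\,d\mu \leq J,
\end{equation*}
where $J$ collects $\int_{B_1}|x|^{-(N_0 - 2)}\,d\mu$ (finite since $N_0 - 2 < N_0$) plus bounded remainders. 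If \eqref{whi} held for some $C > 0$, substituting $\varphi_\varepsilon$ and rearranging would give $(c - \alpha_\varepsilon^2)\, I_\varepsilon \leq CJ + O(1)$; as $\varepsilon \to 0^+$ the coefficient tends to $c - c_0(N_0) > 0$, so the contradiction closes as soon as $I_\varepsilon \to +\infty$. By monotone convergence this is equivalent to $|x|^{-N_0} \notin L^1_{loc}(\R^N, d\mu)$, which is the maximality of $N_0$ encoded in $(H3)\,\mathrm{ii)}$ in its generic, non-attained form.

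\textbf{Main obstacle.} The delicate point is the blow-up $I_\varepsilon \to +\infty$. When the supremum in $(H3)\,\mathrm{ii)}$ is \emph{not} attained, as in every explicit measure treated in the paper (e.g.\ $\mu = Ke^{-b|x|^m}$ or $\mu_\beta = K|x|^{-\beta}e^{-b|x|^m}$), this is automatic and the argument closes. In the borderline regime $|x|^{-N_0} \in L^1_{loc}(d\mu)$ (which can occur for slowly varying measures such as powers of $\log 1/|x|$), the power-type family stays bounded and one must refine $\varphi_\varepsilon$ by an additional logarithmic factor $(\log 1/|x|)^\gamma$ tuned to the fine asymptotics of $\mu$ near the origin, so as to recover blow-up of the critical integral; this is the mechanism underlying the strengthened condition $\mathrm{iii)}$ of $(H3')$ exploited in Theorem \ref{th:whi-no-reached}.
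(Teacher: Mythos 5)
Your argument has a genuine gap exactly where you flag the ``main obstacle'': under Hypothesis $(H3)$ alone the supremum defining $N_0$ may well be attained, i.e. $|x|^{-N_0}\in L^1_{loc}(\R^N,d\mu)$, and in that case $I_\varepsilon$ converges to the finite number $\int_{B_{1/2}}|x|^{-N_0}\,d\mu$ and no contradiction is produced. The fix you sketch --- a logarithmic factor ``tuned to the fine asymptotics of $\mu$'' --- is not carried out, and it cannot be carried out under $(H3)$, which gives no information on $\mu$ beyond the integrability threshold: for a measure behaving like $d\mu=|x|^{N_0-N}e^{-\sqrt{\log(1/|x|)}}\,dx$ near the origin the supremum is attained and $\int_{B_{1/2}}|x|^{-N_0}(\log(1/|x|))^K\,d\mu<\infty$ for every $K$, so no power of the logarithm restores the blow-up. (Condition iii) of $(H3')$, which you invoke, is used in the paper only for the critical case $c=c_0(N_0)$, not to repair the supercritical one.)

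The paper's proof closes this by approaching the threshold from the other side. One fixes a single exponent $\gamma$ with $\max\{-\sqrt c,-N_0/2\}<\gamma<\min\{(-N_0+2)/2,0\}$ --- such a $\gamma$ exists precisely because $c>c_0(N_0)$ --- so that $\gamma^2<c$, $|x|^{2\gamma}\in L^1_{loc}(\R^N,d\mu)$ but $|x|^{2\gamma-2}\notin L^1_{loc}(\R^N,d\mu)$; the latter is automatic from the definition of $N_0$ as a supremum, since $2-2\gamma>N_0$, whether or not the supremum is attained. The profile $|x|^\gamma\vartheta$ is then not in $H^1_\mu$, but its truncations $\varphi_n=\min\{|x|^\gamma\vartheta,n^{-\gamma}\}$ are, and the quadratic form evaluated at $\varphi_n$ is bounded above by $(\gamma^2-c)\int_{B_1\setminus B_{1/n}}|x|^{2\gamma-2}\,d\mu+C_1$, which tends to $-\infty$ unconditionally as $n\to\infty$. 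If you want to keep your ``perturb the extremal'' setup, you must replace the limit $\varepsilon\to 0^+$ along subcritical exponents by a fixed supercritical exponent combined with truncation near the origin.
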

\begin{proof}
Let $\gamma$ be such that $\max\{-\sqrt c,-\frac{N_0}{2}\}< \gamma
<\min\{ \frac{-N_0+2}{2},0\}$ so that $\gamma^2<c$ and from the definition of $N_0$ it follows that $|x|^{2\gamma}\in L^1_{loc}(\R^N,d\mu)$
and $|x|^{2\gamma-2}\notin L^1_{loc}(\R^N,d\mu)$.

Let $n\in\N$ and $\vartheta\in C_c^\infty(\R^N)$, $0\leq \vartheta\leq 1$,
$\vartheta=1$ in $B_1$ and $\vartheta=0$ in $B_2^c$.
Set $\varphi_n(x)=\min\{|x|^\gamma \vartheta(x),n^{-\gamma}\}$.
We observe that
\[
\varphi_n(x)=\left\{
\begin{array}{cl}
n^{-\gamma} &\text{ if }|x|<\frac{1}{n},\\
|x|^\gamma&\text{ if }\frac{1}{n}\leq |x|< 1,\\
|x|^\gamma\vartheta(x) &\text{ if }1\leq |x|<2,\\
0&\text{ if }|x|\geq 2.\\
\end{array}
\right.
\]
The functions $\varphi_n$ are in $H^1_\mu$.

Let us assume that $c>c_0(N_0)$, then we have to prove that the bottom of the spectrum of the operator $-\Delta +
\frac{\nabla \mu}{\mu}\cdot \nabla-\frac{c}{|x|^2}$
\[
\lambda_1=\inf_{\varphi \in H^1_\mu\setminus \{0\} }
\left(
  \frac{\int_{\R^N} \left( |\nabla \varphi|^2-\frac{c}{|x|^2}
\varphi^2 \right)\, d\mu}{\int_{\R^N} \varphi ^{2}\,d\mu}
\right)
\]
is $-\infty$.
To this purpose we have
\begin{align}\label{eq:stima0}
\int_{\R^N} \left( |\nabla \varphi_n|^2-\frac{c}{|x|^2}\varphi_n^2 \right)\, d\mu
& =\int_{B_1\setminus B_{\frac{1}{n}}} \left( \left|\nabla |x|^\gamma \right|^2-
\frac{c}{|x|^2}|x|^{2\gamma} \right)\, d\mu\nonumber\\
&\quad+
\int_{B_1^c} \left( |\nabla |x|^\gamma \vartheta(x) |^2-\frac{c}{|x|^2}
\left( |x|^\gamma \vartheta(x) \right)^2 \right)\, d\mu\nonumber\\
&\quad -c \int_{B_{\frac{1}{n}}} n^{-2\gamma}\frac{1}{|x|^2}\,d\mu\nonumber\\
& \leq (\gamma^2-c)\int_{B_1\setminus B_{\frac{1}{n}}} |x|^{2\gamma-2}\, d\mu+
2\int_{B_1^c} |x|^{2\gamma} |\nabla \vartheta |^2\,d\mu
    \nonumber\\&\quad+2\gamma^2\int_{B_1^c}\vartheta^2|x|^{2\gamma-2}\,d\mu\nonumber\\
& \leq (\gamma^2-c)\int_{B_1\setminus B_{\frac{1}{n}}} |x|^{2\gamma-2}\, d\mu
  +(2\|\nabla \vartheta \|_\infty^2+2\gamma^2)\int_{B_1^c}\,d\mu\nonumber\\
& =(\gamma^2-c)\int_{B_1\setminus B_{\frac{1}{n}}} |x|^{2\gamma-2}\, d\mu+C_1\;.
\end{align}
On the other hand,
\begin{align}\label{eq:stima2}
&\int_{\R^N} \varphi_n ^{2}\,d\mu \geq  \int_{B_2\setminus B_1} |x|^{2\gamma}\vartheta^2(x)\,d\mu=C_2\;.
\end{align}
Taking into account  \eqref{eq:stima0} and \eqref{eq:stima2} we have
\[
\lambda_1\leq \frac{\int_{\R^N} \left( |\nabla \varphi_n|^2-\frac{c}{|x|^2}\varphi_n^2 \right)\, d\mu}
    {\int_{\R^N} \varphi_n ^{2}\,d\mu}
\leq \frac{(\gamma^2-c)\int_{B_1\setminus B_{\frac{1}{n}}} |x|^{2\gamma-2}\, d\mu+C_1}{C_2}\;.
\]
We observe that $\gamma^2-c<0$. Taking the limit $n\to \infty$ we get
\[
 \lim_{n\to \infty }\int_{B_1\setminus B_{\frac{1}{n}}}|x|^{2\gamma-2}\, d\mu=+\infty .
\]
Hence
$\lambda_1=-\infty$.
\end{proof}

Therefore we obtain the following result.

\begin{theorem}\label{th:hardy-achieved}
Assume hypotheses $(H2)$ and $(H3)$ with $c_{0,\mu}=c_0(N_0)$. Then
for any $\varphi\in H^1_\mu $ the following inequality holds
\begin{equation*}
c_0(N_0)\int_{\R^N}\frac{\varphi^2}{|x|^2}\,d\mu\leq\int_{\R^N}|\nabla \varphi |^2
\,d\mu+{C_\mu}\int_{\R^N} \varphi^2\,d\mu
\end{equation*}
and $c_0(N_0)$ is the best constant.
\end{theorem}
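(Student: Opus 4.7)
The plan is to combine the two preceding results directly: Theorem \ref{thm:3-6} supplies the inequality, and the previous optimality theorem supplies the sharpness.

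First, I would invoke Theorem \ref{thm:3-6}, which under Hypothesis $(H2)$ gives, for every $\varphi \in H^1_\mu$,
\[
c_{0,\mu}\int_{\R^N}\frac{\varphi^2}{|x|^2}\,d\mu \leq \int_{\R^N}|\nabla\varphi|^2\,d\mu + C_\mu \int_{\R^N}\varphi^2\,d\mu.
\]
Since by assumption $c_{0,\mu}=c_0(N_0)$, substituting this value into the constant on the left yields exactly the stated inequality. This is the easy half and requires no further calculation, only the reading of the constant.

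Next, to establish that $c_0(N_0)$ is the best constant, I would argue by contradiction. Suppose that the weighted Hardy inequality held with some constant $c>c_0(N_0)$ in place of $c_0(N_0)$, for all $\varphi\in H^1_\mu$. This would contradict the preceding theorem (the optimality theorem proved just above), which under Hypothesis $(H3)$ constructs explicitly, for every $c>c_0(N_0)$, a sequence $\{\varphi_n\}\subset H^1_\mu$ along which the Rayleigh quotient
\[
\frac{\int_{\R^N}\bigl(|\nabla\varphi_n|^2 - \tfrac{c}{|x|^2}\varphi_n^2\bigr)\,d\mu}{\int_{\R^N}\varphi_n^2\,d\mu}
\]
tends to $-\infty$, forcing $\lambda_1(L+c|x|^{-2})=-\infty$. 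Since $\int_{\R^N}\varphi_n^2\,d\mu$ remains bounded (actually bounded below by a positive constant in the construction), no finite $C_\mu$ can compensate, and the supposed Hardy inequality would fail for some $\varphi_n$.

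The main obstacle, in principle, would be to ensure that the test functions $\varphi_n$ used in the optimality theorem genuinely belong to $H^1_\mu$ and are admissible in a putative weighted Hardy inequality; however, this is precisely what the preceding optimality theorem has already verified, so here it is only a matter of quoting it. No further analysis is needed: the conclusion of Theorem \ref{th:hardy-achieved} is the formal union of Theorem \ref{thm:3-6} (giving existence of the inequality with the claimed constant) and the optimality theorem (ruling out any larger constant).
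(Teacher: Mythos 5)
Your proposal is correct and matches the paper's treatment exactly: the paper derives Theorem \ref{th:hardy-achieved} as an immediate combination of Theorem \ref{thm:3-6} (which gives the inequality with constant $c_{0,\mu}=c_0(N_0)$ under $(H2)$) and the preceding optimality theorem under $(H3)$, offering no further argument. Your contradiction step for sharpness is the same observation the paper leaves implicit, namely that the test functions $\varphi_n$ have $L^2_\mu$-norm bounded below while the Rayleigh quotient diverges to $-\infty$.
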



If Hypotesis $(H3')$ with $N_0>2$ holds, one obtains the following theorem.
\begin{theorem}
Let us assume Hypothesis $(H3')$ with $N_0>2$.  Then,  there exists a function in $H^1_\mu$ for which the weighted Hardy inequality \eqref{whi}
does not hold if $c=c_0(N_0)=\left(\frac{N_0-2}{2}\right)^2$.
\end{theorem}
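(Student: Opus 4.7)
My plan is to adapt the argument of the previous theorem, where a discrete truncation handled $c>c_0(N_0)$, by introducing a continuous family of test functions whose exponents approach the critical value $\gamma_0:=-(N_0-2)/2$ (at which $\gamma_0^2=c_0(N_0)$), and then using condition iii) of $(H3')$ to force a divergence. For $\lambda\in(0,1]$ I set $\gamma_\lambda:=\gamma_0+\lambda/2$ and, with the same cutoff $\vartheta\in C_c^\infty(\R^N)$ as in the preceding proof ($\vartheta\equiv 1$ on $B_1$, $\vartheta\equiv 0$ outside $B_2$), I test \eqref{whi} against
\[
\varphi_\lambda(x):=|x|^{\gamma_\lambda}\vartheta(x).
\]

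The first step is to verify that $\varphi_\lambda\in H^1_\mu$. Near the origin, $|x|^{2\gamma_\lambda}=|x|^{2+\lambda-N_0}$ and $|\nabla\varphi_\lambda|^2=\gamma_\lambda^2|x|^{\lambda-N_0}$; both exponents strictly exceed $-N_0$, so condition ii) of $(H3)$ yields $L^1_{loc}(d\mu)$-integrability, while on $B_2\setminus B_1$ everything is bounded and the $\mu$-measure is finite by condition i). Setting
\[
I(\lambda):=\int_{B_1}|x|^{\lambda-N_0}\,d\mu<+\infty ,
\]
a direct computation gives $\int_{B_1}\varphi_\lambda^2/|x|^2\,d\mu=I(\lambda)$ and $\int_{B_1}|\nabla\varphi_\lambda|^2\,d\mu=\gamma_\lambda^2\,I(\lambda)$. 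All other contributions, namely the integrals over $B_2\setminus B_1$ (together with the Leibniz cross terms produced by $\nabla\vartheta$) and the whole $\int\varphi_\lambda^2\,d\mu$, will be shown to be controlled by a single constant $K$ independent of $\lambda\in(0,1]$, thanks to the pointwise majorant $|x|^{2\gamma_\lambda}\le|x|^{2\gamma_0}=|x|^{2-N_0}$ on $B_1$ and the $\mu$-integrability of the latter.

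Assuming for contradiction that \eqref{whi} with $c=c_0(N_0)$ holds for some constant $C_\mu$ and every $\varphi\in H^1_\mu$, substituting $\varphi_\lambda$ and collecting the uniformly bounded terms yields
\[
\big(c_0(N_0)-\gamma_\lambda^2\big)\,I(\lambda)\le K(1+C_\mu)
\]
for all $\lambda\in(0,1]$. The decisive identity is
\[
c_0(N_0)-\gamma_\lambda^2=\gamma_0^2-\big(\gamma_0+\tfrac{\lambda}{2}\big)^2=\tfrac{\lambda(N_0-2)}{2}-\tfrac{\lambda^2}{4}\ge\tfrac{\lambda(N_0-2)}{4}
\]
for all sufficiently small $\lambda>0$, using $N_0>2$. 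This forces $\lambda I(\lambda)$ to remain uniformly bounded as $\lambda\to 0^+$, contradicting condition iii) of $(H3')$.

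The point I expect to be the main obstacle is the uniform bookkeeping of all non-critical terms: one must check that the cutoff errors on the annulus, the Leibniz cross terms produced by differentiating $|x|^{\gamma_\lambda}\vartheta(x)$, and the full $C_\mu\int\varphi_\lambda^2\,d\mu$ correction all stay $O(1)$ as $\lambda\to 0^+$, so that the divergence predicted by iii) of $(H3')$ is channelled exclusively through $\lambda I(\lambda)$. Once the majorant $|x|^{2-N_0}\in L^1_{loc}(d\mu)$ is in place this is routine, and the resulting contradiction closes the argument.
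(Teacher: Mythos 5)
Your proposal is correct and follows essentially the same route as the paper: both test the inequality against $|x|^{\gamma}\vartheta(x)$ with $\gamma$ decreasing to the critical exponent $-(N_0-2)/2$, isolate the term $(c_0(N_0)-\gamma^2)\int_{B_1}|x|^{2\gamma-2}\,d\mu$, and invoke condition iii) of $(H3')$ to produce the divergence. The only cosmetic difference is that the paper phrases the conclusion as $\lambda_1=-\infty$ for the Rayleigh quotient (bounding $\int\varphi_\gamma^2\,d\mu$ from below), while you argue by direct contradiction with a uniform upper bound on the non-critical terms; the two formulations are equivalent.
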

\begin{proof}
Let $\gamma$ be such that $0>\gamma > \frac{-N_0+2}{2}$ so that
$|x|^{2\gamma}\in L^1_{loc}(\R^N,d\mu)$
and $|x|^{2\gamma-2}\in L^1_{loc}(\R^N,d\mu)$
and $\gamma^2<c_0(N_0)$. Let $\vartheta\in C_c^\infty(\R^N)$,
$0\leq \vartheta\leq 1$, $\vartheta=1$ in $B_1$ and $\vartheta=0$ in $B_2^c$.
Set $\varphi_\gamma(x)=|x|^\gamma \vartheta(x)$.
We observe that $\varphi_\gamma\in H^1_\mu$.

Let us set $c=c_0(N)$. We have to prove that
\[
\lambda_1=\inf_{\varphi \in H^1_\mu\setminus \{0\} }
\left(
  \frac{\int_{\R^N} \left( |\nabla \varphi|^2-\frac{c}{|x|^2}\varphi^2 \right)
\,d\mu}{\int_{\R^N } \varphi ^{2}\,d\mu}
\right)
\]
is $-\infty$.
One has
\begin{align}\label{eq:stima0.1}
\int_{\R^N} \left( |\nabla \varphi_\gamma|^2-
\frac{c}{|x|^2}\varphi_\gamma^2 \right)\, d\mu&\quad =\int_{B_1} \left( \left|\nabla |x|^\gamma \right|^2-
\frac{c}{|x|^2}|x|^{2\gamma} \right)\, d\mu\nonumber\\
&\quad+
\int_{B_1^c\cap B_2} \left( |\nabla |x|^\gamma \vartheta(x) |^2-
\frac{c}{|x|^2}\left( |x|^\gamma \vartheta(x) \right)^2 \right)\, d\mu\nonumber\\
& \leq (\gamma^2-c)\int_{B_1} |x|^{2\gamma-2}\, d\mu
    +2\int_{B_1^c\cap B_2} |x|^{2\gamma} |\nabla \vartheta |^2\,d\mu\nonumber\\&\quad+
2\gamma^2\int_{B_1^c\cap B_2}\vartheta^2|x|^{2\gamma-2}\,d\mu\nonumber\\
& \leq (\gamma^2-c)\int_{B_1} |x|^{2\gamma-2}\, d\mu
  +(2\|\nabla \vartheta \|_\infty^2+2\gamma^2)\int_{B_1^c\cap B_2}\,d\mu\nonumber\\
& =(\gamma^2-c)\int_{B_1} |x|^{2\gamma-2}\, d\mu+C_1\;.
\end{align}

On the other hand,
\begin{align}\label{eq:stima2.1}
&\int_{\R^N} \varphi_\gamma ^{2}\,d\mu \geq  \int_{B_1} |x|^{2\gamma}\,d\mu\geq
\int_{B_1}\, d\mu=C_2\;.
\end{align}
Taking into account  \eqref{eq:stima0.1} and \eqref{eq:stima2.1}, we have
\[
\lambda_1\leq \frac{\int_{\R^N} \left( |\nabla \varphi_\gamma|^2-
\frac{c}{|x|^2}\varphi_\gamma^2 \right)\, d\mu}
    {\int_{\R^N} \varphi_\gamma ^{2}\,d\mu}
\leq \frac{(\gamma^2-c)\int_{B_1} |x|^{2\gamma-2}\, d\mu+C_1}{C_2}\;.
\]
Now, taking the limit $\gamma \to \left(\frac{-N_0+2}{2}\right)^+$, by Hypothesis $(H3')$, one obtains
\begin{align*}
& \lim_{\gamma \to \left(\frac{-N_0+2}{2}\right)^+ }(\gamma^2-c)\int_{B_1} |x|^{2\gamma-2}\, d\mu\\
&\quad =\lim_{\gamma \to \left(\frac{-N_0+2}{2}\right)^+ }
  \left(\gamma+\frac{-N_0+2}{2}\right)\left(\gamma-\frac{-N_0+2}{2}\right)\int_{B_1} |x|^{2\gamma-2}\, d\mu
  \\
&\quad=\lim_{\lambda\to 0^+}(\lambda+2-N_0)\lambda\int_{B_1} |x|^{2\lambda-N_0} d\mu=-\infty.
 \end{align*}

Then
$\lambda_1=-\infty$.
\end{proof}

Therefore, we obtain the following result.
\begin{theorem}\label{th:hardy-no-achieved}
Assume hypotheses $(H2')$ and $(H3')$ with $N_0>2$ and $c_{0,\mu}=c_0(N_0)$. Then
for every $\varphi\in H^1_\mu$ the following inequality holds
\begin{equation*}
c\int_{\R^N}\frac{\varphi^2}{|x|^2}\,d\mu<\int_{\R^N}|\nabla \varphi |^2
\,d\mu+{C_\mu}\int \varphi^2\,d\mu
\end{equation*}
for all $c<c_0(N_0)$
and $c_0(N_0)$ is the best constant.
\end{theorem}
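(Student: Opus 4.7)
The theorem has two contents to verify: the weighted Hardy inequality for all $c<c_0(N_0)$, and the sharpness of $c_0(N_0)$. Both follow by assembling results already established in the excerpt.

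For the inequality itself, the plan is to apply Theorem \ref{pr:h20}. By assumption $(H2')$ holds and $c_{0,\mu}=c_0(N_0)$, so for any fixed $c<c_0(N_0)$ I pick an intermediate value $c<c'<c_0(N_0)$ and invoke Theorem \ref{pr:h20} with that $c'$: this yields a constant $C_\mu$ (depending on $c'$, and hence on $c$) such that
\[
c'\int_{\R^N}\frac{\varphi^2}{|x|^2}\,d\mu\le\int_{\R^N}|\nabla\varphi|^2\,d\mu+C_\mu\int_{\R^N}\varphi^2\,d\mu,\qquad \varphi\in H^1_\mu.
\]
Since $c<c'$ and the left integrand is nonnegative, replacing $c'$ by $c$ preserves the inequality, and it is strict whenever $\varphi$ does not vanish almost everywhere (otherwise both sides are zero and the statement is trivial). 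This explains the $<$ sign in the claim.

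For the optimality, the plan is to invoke the preceding theorem, which under Hypothesis $(H3')$ with $N_0>2$ produced a family of test functions $\varphi_\gamma=|x|^\gamma\vartheta(x)\in H^1_\mu$ (for $\gamma\to((2-N_0)/2)^+$) along which the Rayleigh quotient
\[
\frac{\int_{\R^N}\bigl(|\nabla\varphi_\gamma|^2-\tfrac{c_0(N_0)}{|x|^2}\varphi_\gamma^2\bigr)\,d\mu}{\int_{\R^N}\varphi_\gamma^2\,d\mu}
\]
tends to $-\infty$. In particular, for any fixed constant $C_\mu$, one can choose $\gamma$ close enough to $(2-N_0)/2$ so that
\[
c_0(N_0)\int_{\R^N}\frac{\varphi_\gamma^2}{|x|^2}\,d\mu>\int_{\R^N}|\nabla\varphi_\gamma|^2\,d\mu+C_\mu\int_{\R^N}\varphi_\gamma^2\,d\mu.
\]
Consequently no value $c\ge c_0(N_0)$ can appear on the left-hand side of \eqref{whi} for a uniform $C_\mu$, proving that $c_0(N_0)$ is the best constant.

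I do not anticipate any real obstacle here: the two ingredients (the existence part from Theorem \ref{pr:h20} and the nonexistence part from the preceding theorem in the same section) fit together exactly. The only mildly subtle point is the cosmetic passage from $\le$ (as stated in Theorem \ref{pr:h20}) to the strict $<$ appearing in the conclusion, which is handled by the intermediate-constant trick above; and the fact that the optimality argument rules out $c=c_0(N_0)$ (and hence all larger $c$) rather than only $c>c_0(N_0)$, which is precisely the improvement offered by the stronger hypothesis $(H3')$ over $(H3)$.
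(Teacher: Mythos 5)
Your proposal is correct and follows exactly the paper's route: the paper also obtains this theorem by simply combining Theorem \ref{pr:h20} (which gives \eqref{whi} for every $c<c_{0,\mu}=c_0(N_0)$ under $(H2')$) with the immediately preceding nonexistence theorem under $(H3')$, which shows the inequality fails already at $c=c_0(N_0)$. Your two small additions --- the intermediate-constant trick to justify the strict sign and the remark that $\varphi\equiv 0$ is the only degenerate case --- are harmless clarifications of points the paper leaves implicit.
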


In conclusion, we have proved
Theorem \ref{th:hardy-achieved} and \ref{th:hardy-no-achieved}  which, together with Theorem \ref{th:cabr-mart-H1}, give
Theorem \ref{thm-main2}.

\section{Examples}
We finally give some examples of measures for which the weighted Hardy inequality holds.

\begin{proposition}\label{prop-fed1}
Let $d\mu=\rho(x) dx$ with $\rho(x)=e^{-b|x|^{m}}$, $m>0,\,b\ge 0,$ and $N\ge 3$. Then
there exists a positive constant $C$ such that for all $u\in H^1_\mu$ the following inequality
\begin{equation*}
c_0(N)\int_{\R^N}\frac{u^2}{|x|^2}\,d\mu\leq\int_{\R^N}|\nabla u |^2\,
d\mu+C\int_{\R^N}u^2\,d\mu\;.
\end{equation*}
holds with best constant.
\end{proposition}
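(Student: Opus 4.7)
The strategy is to verify that $\mu(x)=e^{-b|x|^m}$ satisfies the three standing hypotheses $(H1)$, $(H2)$, $(H3)$ with $c_{0,\mu}=c_0(N_0)=c_0(N)$, and then invoke Theorem \ref{th:hardy-achieved}. The case $b=0$ is the classical Hardy inequality, so we assume $b>0$.

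The first step is an explicit computation of the drift and of the auxiliary potential $U_\mu$. A direct calculation gives
\[
\frac{\nabla\mu}{\mu}=-bm|x|^{m-2}x,\qquad \frac{\Delta\mu}{\mu}=b^2m^2|x|^{2m-2}-bm(N+m-2)|x|^{m-2},
\]
so that
\[
U_\mu=\frac{1}{4}\Big|\frac{\nabla\mu}{\mu}\Big|^2-\frac{1}{2}\frac{\Delta\mu}{\mu}
=-\frac{b^2m^2}{4}|x|^{2m-2}+\frac{bm(N+m-2)}{2}|x|^{m-2}.
\]
From this one reads $|x|^2U_\mu=-\tfrac{b^2m^2}{4}|x|^{2m}+\tfrac{bm(N+m-2)}{2}|x|^m\to 0$ as $x\to 0$ (since $m>0$), hence $c_{0,\mu}=c_0(N)$ and $U=U_\mu$.

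The second step is to check the remaining conditions. For $(H1)$: $\mu$ is smooth on $\Omega=\R^N\setminus\{0\}$, and $|\nabla\mu/\mu|=bm|x|^{m-1}$ lies in $L^r_{\loc}(\R^N)$ for every $r<N/(1-m)_+$, in particular for some $r>N$ when $m>0$; moreover $\mu\in H^1_{\loc}(\R^N)$ whenever $2(m-1)>-N$, which holds for $N\geq 3$ and $m>0$. For $(H2)$ iii): on $|x|>R$, the two power functions $|x|^{2m-2}$ and $|x|^{m-2}$ are bounded, and when $m>1$ the dominant term is negative, so $U$ is bounded from above outside any ball. For $(H2)$ iv): near the origin $|x|^2U=O(|x|^m)$ and $|x|^m|\log|x||^2\to 0$ as $x\to 0$, so the bound $|x|^2U(x)\leq\tfrac14|\log|x||^{-2}$ holds on some $B_{R_0}$. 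For $(H3)$: $\mu$ is continuous and bounded near $0$, so $|x|^{-\delta}\in L^1_{\loc}(\R^N,d\mu)$ if and only if $\delta<N$, giving $N_0=N$ and hence $c_0(N_0)=c_0(N)=c_{0,\mu}$.

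With all hypotheses verified and the equality $c_{0,\mu}=c_0(N_0)$ in hand, Theorem \ref{th:hardy-achieved} applies and yields the stated inequality with best constant $c_0(N)$. The only mildly delicate point is the verification of condition iv) of $(H2)$ for the full range $m>0$, but this reduces to the elementary asymptotic $|x|^m|\log|x||^2\to 0$ as $x\to 0$; the integrability condition in $(H1)$ for $0<m<1$ similarly reduces to the elementary inequality $N<N/(1-m)$.
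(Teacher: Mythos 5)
Your proposal is correct and follows essentially the same route as the paper: compute $U_\mu=-\tfrac14 b^2m^2|x|^{2m-2}+\tfrac12 bm(N+m-2)|x|^{m-2}$, observe $|x|^2U_\mu\to 0$ so that $c_{0,\mu}=c_0(N)$ and $U=U_\mu$, check $(H2)$ and $(H3)$ with $N_0=N$, and invoke Theorem \ref{th:hardy-achieved}. You simply supply more of the routine verifications (conditions iii) and iv) of $(H2)$, and $(H1)$) than the paper's proof spells out.
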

\begin{proof}
This measure satisfies assumptions $i)$ and $ii)$ of $(H2)$. Then, by a simple computation one obtains
\[
U_\mu=-\frac{1}{4}b^2m^2|x|^{2m-2}+\frac{1}{2}bm(N+m-2)|x|^{m-2},
\]
and $\lim_{x\to 0}|x|^2U_\mu(x)=0$. Therefore, $c_{0,\mu}=c_0(N)$, $U_\mu=U$ is a bounded from above far from $0$ and assumptions $(H2)$ and $(H3)$ are satisfied with $N_0=N$. Then the assertion follows from Theorem \ref{th:hardy-achieved}. Moreover, $\mu$ satisfies also Hypothesis $(H1)$.
\end{proof}

\begin{proposition}\label{prop-fed2}
Let us consider
$d\mu=\frac{1}{|x|^\beta}\rho(x)dx$ with $\rho(x)$ as in Proposition \ref{prop-fed1}, $N\ge 3$ and $\beta<N-2$.
 Then there exists a constant $C\ge 0$ such that for all $u\in H^1_\mu$
\begin{equation*}
c_0(N-\beta)\int_{\R^N}\frac{u^2}{|x|^2}\,d\mu
\leq\int_{\R^N}|\nabla u |^2
\,d\mu+C\int_{\R^N}u^2\,d\mu
\end{equation*}
holds with best constant.
\end{proposition}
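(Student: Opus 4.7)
The plan is to reduce everything to Theorem \ref{th:hardy-achieved} by checking, for the new density $\mu(x)=|x|^{-\beta}e^{-b|x|^m}$, the three structural hypotheses $(H2)$ and $(H3)$ together with the crucial matching condition $c_{0,\mu}=c_0(N_0)$. In practice this boils down to an explicit computation of the Schr\"odinger potential $U_\mu=\tfrac14|\nabla\mu/\mu|^2-\tfrac12\Delta\mu/\mu$ and an integrability check.

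First I would compute $\nabla\mu/\mu=-\beta x/|x|^2-bm|x|^{m-2}x$ and $\Delta\mu/\mu$ using $\Delta\log\mu=-\beta(N-2)/|x|^2-bm(m+N-2)|x|^{m-2}$ together with $|\nabla\log\mu|^2$. After the cancellations one finds
\begin{equation*}
U_\mu(x)=\frac{\beta(2N-4-\beta)}{4|x|^2}+\frac{bm(m+N-\beta-2)}{2}|x|^{m-2}-\frac{b^2m^2}{4}|x|^{2m-2}.
\end{equation*}
Multiplying by $|x|^2$ and passing to the limit $x\to 0$ (recall $m>0$) gives $\lim_{x\to0}|x|^2U_\mu=\beta(2N-4-\beta)/4$, whence
\begin{equation*}
c_{0,\mu}=\frac{(N-2)^2}{4}-\frac{\beta(2N-4-\beta)}{4}=\frac{(N-\beta-2)^2}{4}=c_0(N-\beta).
\end{equation*}

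Next I would verify $(H2)$. Part i) uses $\beta<N-2$: then $\mu^{1/2}=|x|^{-\beta/2}e^{-b|x|^m/2}$ and $|\nabla\mu^{1/2}|^2\sim|x|^{-\beta-2}$ near the origin are locally integrable, and $\Delta\mu=\mu(\Delta\mu/\mu)$ has the integrable singularity $|x|^{-\beta-2}$. Part ii) was just established. For part iii), the remainder
\begin{equation*}
U(x)=U_\mu(x)-\frac{\beta(2N-4-\beta)}{4|x|^2}=\frac{bm(m+N-\beta-2)}{2}|x|^{m-2}-\frac{b^2m^2}{4}|x|^{2m-2}
\end{equation*}
is bounded above on $\R^N\setminus B_R$ for every $R>0$ (the leading term at infinity has a negative coefficient when $b>0$, and the case $b=0$ is trivial). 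For part iv), since $\beta<N-2$ the coefficient of $|x|^{m-2}$ is nonnegative and $|x|^2U(x)=O(|x|^m)$; as $|x|^m|\log|x||^2\to0$ when $x\to0$, one can choose $R_0$ small enough that $|x|^2U(x)\le\tfrac14|\log|x||^{-2}$ on $B_{R_0}$.

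For $(H3)$, locally at the origin $d\mu\asymp|x|^{-\beta}dx$, so $|x|^{-\delta}\in L^1_{loc}(\R^N,d\mu)$ amounts to $\int_0^1 r^{N-1-\beta-\delta}dr<\infty$, i.e.\ $\delta<N-\beta$; thus $N_0=N-\beta$. Combined with the previous step this gives exactly $c_{0,\mu}=c_0(N_0)$, so Theorem \ref{th:hardy-achieved} applies and yields the inequality together with optimality of $c_0(N-\beta)$. The only mildly delicate point is the verification of iv) in $(H2)$: one has to make sure the coefficient $bm(m+N-\beta-2)/2$ is harmless, which is automatic from $\beta<N-2$; everything else is a bookkeeping exercise.
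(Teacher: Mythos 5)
Your proposal is correct and follows exactly the paper's route: compute $U_\mu$ explicitly, identify $\lim_{x\to 0}|x|^2U_\mu=\beta(2N-4-\beta)/4$ so that $c_{0,\mu}=c_0(N-\beta)$, check $(H2)$ (with the remainder $U$ reducing to the $b$-dependent terms) and $(H3)$ with $N_0=N-\beta$, and invoke Theorem \ref{th:hardy-achieved}. The paper compresses all of this into ``by a simple computation''; your write-up just supplies the details, including the verification of $(H2)\,{\rm iv)}$ via $|x|^2U=O(|x|^m)=o(|\log|x||^{-2})$, all of which checks out.
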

\begin{proof}
This measure satisfies assumptions $i)$ and $ii)$ of $(H2)$ if $\beta<N-2$. Moreover, by a simple computation, one has $c_{0,\mu}=c_0(N-\beta)$ and
\[
 U=-\frac{1}{4}b^2m^2|x|^{2m-2}+\frac{1}{2}bm(N+m-2-\beta)|x|^{m-2}.
 \]
Then  $(H2)$ and $(H3)$ hold with $c_{0,\mu}=c_0(N-\beta)$, $N_0=N-\beta$. The result follows from Theorem \ref{th:hardy-achieved}.
\end{proof}


By taking $b=0$ in Proposition \ref{prop-fed2} we can also state the Caffarelli-Nirenberg inequality.
\begin{corollary}
If $\beta <N-2$ and $N\ge 3$ then the following inequality holds
\begin{equation*}
\left( \frac{N-2-\beta}{2} \right)^2\int_{\R^N}\frac{u^2}{|x|^2}|x|^{-\beta}\,dx
\leq\int_{\R^N}|\nabla u |^2|x|^{-\beta}\,dx,
\qquad \forall u\in H^1(\R^N).
\end{equation*}
\end{corollary}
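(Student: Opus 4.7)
The plan is to specialize Proposition \ref{prop-fed2} to the degenerate case $b=0$ and observe that the error term disappears, yielding the Caffarelli-Nirenberg inequality with no constant term on the right.

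First I would set $b=0$ in the statement of Proposition \ref{prop-fed2}. Then $\rho(x)=e^{-b|x|^m}=1$, so the measure becomes $d\mu=|x|^{-\beta}dx$. The assumption $\beta<N-2$ ensures that conditions $\mathrm{i}),\mathrm{ii})$ of $(H2)$ still hold (this was already observed in the proof of Proposition \ref{prop-fed2}), and a direct inspection of the potential
\[
U=-\frac{1}{4}b^2m^2|x|^{2m-2}+\frac{1}{2}bm(N+m-2-\beta)|x|^{m-2}
\]
shows that $U\equiv 0$ when $b=0$. Moreover, $c_{0,\mu}=c_0(N-\beta)=\left(\frac{N-\beta-2}{2}\right)^2$ as before.

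Next, instead of invoking Proposition \ref{prop-fed2} (which produces an extra $C\int u^2 d\mu$), I would apply Corollary \ref{pr:hardy-gen-bound0} directly. Since $U\leq 0$, I can choose $C_\mu=0$ in that corollary. This gives, for every $\varphi\in H^1_\mu$,
\[
\left(\frac{N-\beta-2}{2}\right)^2\int_{\R^N}\frac{\varphi^2}{|x|^2}|x|^{-\beta}\,dx\leq\int_{\R^N}|\nabla \varphi|^2|x|^{-\beta}\,dx,
\]
which is exactly the required inequality, restricted to $H^1_\mu$.

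Finally, I would upgrade the class of test functions from $H^1_\mu$ to $H^1(\R^N)$. Since $\beta<N-2<N$, the weight $|x|^{-\beta}$ is locally integrable, so $C_c^\infty(\R^N)\subset H^1_\mu$, and the inequality holds on $C_c^\infty(\R^N)$. For general $u\in H^1(\R^N)$ one approximates by standard truncation and mollification; both sides of the inequality are integrals of non-negative functions, so Fatou's lemma transfers the bound to the limit. The only delicate point is checking that the approximation keeps $|\nabla u|^2|x|^{-\beta}$ under control near the origin, but since $\beta<N-2$ this is immediate from the integrability of $|x|^{-\beta}$ on compact sets. I do not expect any real obstacle here: the whole argument is an instantiation of the already-proven framework, with the sole new input being the explicit observation that $U\equiv 0$ when $b=0$.
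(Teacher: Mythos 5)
Your proposal is correct and follows essentially the same route as the paper, which simply takes $b=0$ in Proposition \ref{prop-fed2}. You additionally make explicit the point the paper leaves implicit, namely that $U\equiv 0$ when $b=0$ so that Corollary \ref{pr:hardy-gen-bound0} applies with $C_\mu=0$ and the lower-order term $C\int u^2\,d\mu$ can be dropped; this is a useful clarification rather than a deviation.
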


The following is an example for a weight which behaves like logarithm near $0$.

\begin{proposition}
Let $\theta \in C_c^\infty(\z)$ with $0\le \theta \le 1$ such that $\theta =1$ on $B_{1/2}$ and $\theta=0$ on $B_1^c$. Assume that
$$\mu(x)= \theta(x)\left( \log \frac{1}{|x|} \right)^{\alpha},\quad x\in \z.$$
\begin{itemize}
\item[$\bullet$] If $\alpha \leq 0$, then there exists a constant $C\ge 0$ such that for all $u\in H^1_\mu$
\begin{equation*}
c_0(N)\int_{\R^N}\frac{u^2}{|x|^2}\,d\mu
\leq\int_{\R^N}|\nabla u |^2
\,d\mu+C\int_{\R^N}u^2\,d\mu
\end{equation*}
holds with best constant.
\item[$\bullet$] If $\alpha >0$, then there exist $c,C\ge 0$ such that for all $u\in H^1_\mu$
\begin{equation*}
c\int_{\R^N}\frac{u^2}{|x|^2}\,d\mu
\leq\int_{\R^N}|\nabla u |^2
\,d\mu+C\int_{\R^N}u^2\,d\mu
\end{equation*}
holds for every $c<c_0(N)$ and $c_0(N)$ is the best constant.
\end{itemize}
\end{proposition}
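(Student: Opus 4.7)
The strategy is to check that $\mu$ falls under one of the two master theorems (Theorem \ref{th:hardy-achieved} for $\alpha\le 0$, Theorem \ref{th:hardy-no-achieved} for $\alpha>0$) and to compute all the relevant constants explicitly. Since $\mu$ is supported in $\overline{B_1}$ and coincides with $(\log(1/|x|))^\alpha$ on $B_{1/2}$, every condition near the origin reduces to a computation for the radial profile $g(r)=(\log(1/r))^\alpha$, while every condition at infinity is trivial because $\mu\equiv 0$ there.

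First I would compute, on $B_{1/2}$,
$$\frac{\nabla\mu}{\mu}(x)=-\frac{\alpha}{|x|^2\log(1/|x|)}\,x,\qquad \frac{\Delta\mu}{\mu}(x)=-\frac{\alpha(N-2)}{|x|^2\log(1/|x|)}+\frac{\alpha(\alpha-1)}{|x|^2(\log(1/|x|))^2},$$
so that, after combining the $(\log)^{-2}$ terms,
$$|x|^2U_\mu(x)=\frac{\alpha(N-2)}{2\log(1/|x|)}+\frac{\alpha(2-\alpha)}{4(\log(1/|x|))^2}\longrightarrow 0\quad\text{as }x\to 0.$$
This gives $c_{0,\mu}=c_0(N)$ and $U\equiv U_\mu$. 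Conditions i)--iii) of $(H2)$ are then routine: condition i) amounts to $\mu^{1/2}\in H^1_{loc}$ and $\Delta\mu\in L^1_{loc}$, which for $N\ge 3$ follows from the mild blow-up $\sim r^{-2}(\log 1/r)^{\alpha-2}$ integrated against $r^{N-1}$; condition iii) holds because $U$ vanishes on $B_1^c$ and is locally bounded on $B_1\setminus\{0\}$. Hypothesis $(H3)$ holds with $N_0=N$, since $\int_0^{1/2} r^{N-1-\delta}(\log 1/r)^\alpha\,dr$ converges for every $\delta<N$ (the log is slowly varying) and diverges for $\delta>N$.

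If $\alpha\le 0$, I would verify condition iv) of $(H2)$ by noting that both summands in the above expression for $|x|^2U_\mu$ are non-positive: $\alpha(N-2)\le 0$ because $N\ge 3$, and $\alpha(2-\alpha)\le 0$ because $\alpha\le 0$ and $2-\alpha>0$. Hence $|x|^2U(x)\le 0\le \tfrac{1}{4}(\log|x|)^{-2}$ for all sufficiently small $x$, so $(H2)$ iv) holds, and Theorem \ref{th:hardy-achieved} yields the first claim with best constant $c_0(N)$.

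If $\alpha>0$, the leading contribution $\frac{\alpha(N-2)}{2\log(1/|x|)}$ is strictly positive and of order $1/\log(1/|x|)$, which dominates $1/(\log(1/|x|))^2$; so $(H2)$ iv) fails, but $(H2')$ still holds. It remains to check $(H3')$ iii), which I would obtain via the substitution $u=\log(1/r)$, yielding
$$\lambda\int_{B_1}|x|^{\lambda-N}\,d\mu \;=\; c_N\,\lambda\int_{\log 2}^{\infty} u^{\alpha}e^{-\lambda u}\,du + O(\lambda)\;\sim\; c_N\,\Gamma(\alpha+1)\,\lambda^{-\alpha}\longrightarrow +\infty\quad(\lambda\to 0^+),$$
the divergence requiring precisely $\alpha>0$. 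Theorem \ref{th:hardy-no-achieved} then delivers the weighted Hardy inequality for every $c<c_0(N)$, with $c_0(N)$ as the (unattained) best constant. The only real obstacle is bookkeeping: a careful sign analysis of the three competing terms in $|x|^2U_\mu$ for condition iv) when $\alpha\le 0$, and the Gamma-type asymptotic of the Mellin integral when $\alpha>0$.
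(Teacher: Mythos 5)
Your proposal is correct and takes essentially the same route as the paper: verify $(H2)$ (resp.\ $(H2')$) and $(H3)$ (resp.\ $(H3')$) via the explicit computation $|x|^2U_\mu=\frac{\alpha(N-2)}{2}\bigl(\log\frac{1}{|x|}\bigr)^{-1}+\frac{\alpha(2-\alpha)}{4}\bigl(\log\frac{1}{|x|}\bigr)^{-2}$, conclude $c_{0,\mu}=c_0(N)$ and $N_0=N$, and invoke Theorem \ref{th:hardy-achieved} for $\alpha\le 0$ and Theorem \ref{th:hardy-no-achieved} for $\alpha>0$. The only cosmetic difference is in checking $(H3')$: you use the substitution $u=\log(1/r)$ and a $\Gamma$-function asymptotic $\lambda^{-\alpha}\Gamma(\alpha+1)$, whereas the paper integrates by parts and observes that $\int_0^r s^{-1}(\log\frac{1}{s})^{\alpha-1}\,ds$ diverges exactly when $\alpha>0$; both yield the same conclusion.
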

\begin{proof}
One has that $\mu$ satisfies assumptions  $(H2')$ and $(H3)$ with $c_{0,\mu}=c_0(N)$ and $N_0=N$ for all $\alpha\in\R$. Moreover, $\mu$ satisfies $(H2)$ if and only if $\alpha \leq  0$
and $\mu$ satisfies $(H3')$
if and only if $\alpha > 0$. Indeed, by a simple computation, one obtains
\begin{equation}\label{eq:esempio1}
|x|^2U_\mu=\left( \frac{1}{4}-\frac{(\alpha-1)^2}{4} \right)\left( \log\frac{1}{|x|} \right)^{-2}
  +\frac{\alpha}{2}(N-2)\left( \log\frac{1}{|x|} \right)^{-1},\quad x\in B_{1/2}.
\end{equation}
We have $\limsup_{x\to 0}|x|^2U_\mu=0$, then the constant in Hardy's inequality is $c_0(N)$. By \eqref{eq:esempio1}   it is easy to check that assumption $(H2)$ is satisfied if and only if $\alpha \leq 0.$ Then, for $\alpha\leq0$ Theorem \ref{th:hardy-achieved} applies.

Now, to show that $(H3')$ is satisfied for $\alpha >0$ it suffices to prove that
\[
 \lim_{\lambda\to 0^+}\lambda\int_{B_r}|x|^{\lambda-N}\,d\mu=+\infty
\]
for some positive $r<\frac{1}{2}$. To this purpose we have
\begin{align*}
&\gamma\int_{B_r}\frac{|x|^\lambda}{|x|^N}\,d\mu=\omega_N\int_0^r\lambda s^{\lambda-1}\left( \log \frac{1}{s} \right)^\alpha\, ds \\
&\quad  =\omega_N\int_0^rs^{\lambda-1}\alpha\left( \log \frac{1}{s} \right)^{\alpha-1}\,ds
  +\omega_N\left[s^\lambda\left( \log\frac{1}{s} \right)^{\alpha}\right]_0^r.
\end{align*}
The second term is uniformly bounded for every $\lambda>0$. As regards
the first term, it grows to infinity for $\lambda\to 0^+$ if and only if $\alpha-1>-1.$ Therefore, for $\alpha>0$, the assertion follows from Theorem \ref{th:hardy-no-achieved}.
\end{proof}

Finally, in the following example the constants $c_{0,\mu}$ and $c_0(N)$ do not coincide.

\begin{proposition}
Let $\mu$ be the density $\mu(x)=\mu_1\chi_{B_{1/2}}(x)+\mu_2\chi_{B^c_{1/2}}(x)$ where
$$\mu_1= (2+\sin\log |x|)$$
and
$\mu_2\in C^2_b(\R^N)$ a nonnegative smooth function such that $\mu\in H^2_{loc}(\R^N)$.
Then $\mu$ satisfies $(H1)$, $(H2')$ and $(H3)$ with $c_{0,\mu}<c_0(N)$.
\end{proposition}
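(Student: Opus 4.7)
The plan is to verify Hypotheses $(H1)$, $(H2')$, and $(H3)$ by explicit computation of the auxiliary potential $U_\mu$ on the ball $B_{1/2}$, and then to exhibit the strict inequality $c_{0,\mu}<c_0(N)$ by evaluating the resulting expression along a suitable sequence going to the origin. Since $\mu$ coincides with $\mu_2\in C^2_b(\R^N)$ outside $B_{1/2}$, the only nontrivial analysis concerns the behavior near $0$.

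Writing $\theta=\log|x|$, one computes $\nabla\mu_1=\cos\theta\, x/|x|^2$ and $\Delta\mu_1=(-\sin\theta+(N-2)\cos\theta)/|x|^2$. Substituting into $U_\mu=\tfrac14|\nabla\mu/\mu|^2-\tfrac12\Delta\mu/\mu$ and simplifying --- the key observation being that the numerator of $|x|^2U_\mu$ reorganizes as $(2+\sin\theta)^2-3-2(N-2)(2+\sin\theta)\cos\theta$ --- one obtains
\[
|x|^2U_\mu(x)=\frac14-\frac{3}{4(2+\sin\log|x|)^{2}}-\frac{(N-2)\cos\log|x|}{2(2+\sin\log|x|)},\qquad 0<|x|<\tfrac12.
\]
The right-hand side is a continuous, $2\pi$-periodic, nonconstant function of $\theta$, so $\limsup_{x\to 0}|x|^2U_\mu$ coincides with the (finite) maximum of this function. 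Evaluating along the sequence $|x|=\exp(\pi/2-2k\pi)\to 0$ (where $\sin\theta=1$, $\cos\theta=0$) yields the value $\tfrac14-\tfrac{1}{12}=\tfrac16$; hence $\limsup_{x\to 0}|x|^2U_\mu\geq\tfrac16$, and therefore
\[
c_{0,\mu}=c_0(N)-\limsup_{x\to 0}|x|^2U_\mu\leq c_0(N)-\tfrac16<c_0(N),
\]
which is the strict inequality claimed. In particular $c_{0,\mu}\in\R$, giving $(H2')$(ii).

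The remaining conditions I would check as follows. For $(H2')$(iii), set $\kappa:=\limsup_{x\to 0}|x|^2U_\mu$; then $U=U_\mu-\kappa/|x|^2$ satisfies $|x|^2U\in L^\infty(B_{1/2})$ by the closed form above, and $|U|\leq\|U_{\mu_2}\|_\infty+\kappa/R^2$ on $\R^N\setminus B_{1/2}$ thanks to $\mu_2\in C^2_b$ with $\inf_{B_{1/2}^c}\mu_2>0$; combining these gives $U$ bounded above on $\R^N\setminus B_R$ for every $R>0$. Condition $(H2')$(i) follows from $\mu\in H^2_{loc}(\R^N)$ together with $\mu\geq 1$ on $B_{1/2}$. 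For $(H3)$, the two-sided bound $1\leq\mu_1\leq 3$ on $B_{1/2}$ yields $\int_{B_{1/2}}|x|^{-\delta}\,d\mu<\infty$ iff $\delta<N$, so $N_0=N$. For $(H1)$, smoothness and positivity of $\mu$ on $\Omega$ are immediate, while $\mu\in H^1_{loc}(\R^N)$ and the local integrability of $\nabla\mu/\mu$ follow from the explicit form of $\nabla\mu_1$ together with $\mu_1\geq 1$ (using $N\geq 3$).

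The main obstacle is the algebraic simplification leading to the compact expression for $|x|^2U_\mu$: one has to notice that the $N$-dependent cross-term cleanly separates from the $N$-independent piece, making the closed form transparent. Once this form is in hand, the oscillation of $\sin\log|x|$ and $\cos\log|x|$ at the origin --- which prevents $|x|^2U_\mu$ from admitting a limit at $0$ --- automatically forces $\limsup|x|^2U_\mu>0$, and it is precisely this oscillatory obstruction that produces the phenomenon $c_{0,\mu}<c_0(N)$ distinguishing this example from the previous ones.
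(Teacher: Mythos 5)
Your proof is correct and follows essentially the same route as the paper: compute $|x|^2U_\mu$ explicitly on $B_{1/2}$, observe that $\limsup_{x\to 0}|x|^2U_\mu>0$ (hence $c_{0,\mu}<c_0(N)$), and deduce $N_0=N$ from $1\le\mu_1\le 3$. Your closed-form simplification and the explicit sequence $|x|=e^{\pi/2-2k\pi}$ giving the value $\tfrac16$ actually supply the detail the paper leaves as ``it can be seen that,'' and your checks of the remaining items of $(H1)$, $(H2')$ are consistent with (and more explicit than) the paper's argument.
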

\begin{proof} We have
\[
 |x|^2U_\mu(x)=\frac{1}{4}\left( \frac{\cos \log |x|}{2+\sin\log |x|} \right)^{2}
    -\frac{1}{2}\frac{(N-2)\cos \log |x|-\sin\log |x|}{2+\sin\log |x|},\quad x\in B_{1/2}.
\]
Moreover, it can be seen that $\limsup_{x\to 0}|x|^2U_\mu(x)>0$. Hence, $c_{0,\mu}<c_0(N)$. Since $\mu$ is bounded and positive, we have
\[
 \int_{B_r}|x|^{\gamma-N}\,d\mu<\infty
\]
for some $r>0$ and for every $\gamma >0$ and $\int_{B_r}\frac{1}{|x|^N}\,d\mu=\infty$.
Then $N_0=N.$
\end{proof}

\section{Appendix}
Let $d\mu$ be a positive measure and $\Omega :=\R^N\setminus \{0\}$. Set $L^p_\mu=L^p(\R^N,d\mu)$ and $\|u\|_{p,\mu}=\left( \int_{\R^N}|u|^p\,d\mu \right)^{\frac{1}{p}}$ for $p\in [1,\infty)$.

\begin{proposition}\label{prop:appendix}
Let $W_\mu^{1,p}=\overline{C_c^{\infty}(\R^N)}^{\|\cdot\|_{W_\mu^{1,p}}}$,
where $\|u\|_{W_\mu^{1,p}}=\|u\|_{p,\mu}+\|\nabla u\|_{p,\mu}$.
If
\begin{equation}\label{eq:cond1}
\lim_{\delta\to 0}\frac{1}{\delta^p}\int_{B_\delta}\,d\mu=0
\end{equation}
then $C_c^{\infty}(\Omega)$ is dense in $W^{1,p}_{\mu}$.
\end{proposition}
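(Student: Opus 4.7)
The plan is to prove density by approximating an arbitrary $\varphi \in C_c^\infty(\mathbb{R}^N)$ in the $W_\mu^{1,p}$ norm by functions of the form $\eta_\delta \varphi$, where $\eta_\delta$ is a smooth cutoff that vanishes in a neighbourhood of the origin. Since $C_c^\infty(\mathbb{R}^N)$ is already dense in $W_\mu^{1,p}$ by definition, this will suffice.

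Concretely, I would fix a function $\eta \in C^\infty(\mathbb{R})$ with $0\le\eta\le 1$, $\eta(s)=0$ for $s\le 1$, $\eta(s)=1$ for $s\ge 2$, and set $\eta_\delta(x):=\eta(|x|/\delta)$. Then $\eta_\delta \in C^\infty(\mathbb{R}^N)$, $\eta_\delta\equiv 0$ on $B_\delta$, $\eta_\delta\equiv 1$ on $B_{2\delta}^c$, and $|\nabla \eta_\delta(x)|\le C/\delta$ with support of $\nabla \eta_\delta$ contained in $B_{2\delta}\setminus B_\delta$. For $\varphi\in C_c^\infty(\mathbb{R}^N)$ the product $\eta_\delta\varphi$ belongs to $C_c^\infty(\Omega)$. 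It remains to show $\eta_\delta\varphi\to\varphi$ in $W^{1,p}_\mu$ as $\delta\to 0$.

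For the $L^p_\mu$-convergence I estimate
\[
\|\eta_\delta\varphi-\varphi\|_{p,\mu}^p=\int_{\mathbb{R}^N}|1-\eta_\delta|^p|\varphi|^p\,d\mu\le \|\varphi\|_\infty^p\int_{B_{2\delta}}d\mu,
\]
which vanishes as $\delta\to 0$ because (\ref{eq:cond1}) forces in particular $\mu(B_{2\delta})\to 0$. For the gradient I split
\[
\nabla(\eta_\delta\varphi)-\nabla\varphi=(\eta_\delta-1)\nabla\varphi+\varphi\nabla\eta_\delta.
\]
The first term is controlled as above by $\|\nabla\varphi\|_\infty^p\,\mu(B_{2\delta})\to 0$. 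The main obstacle, and the step where the hypothesis is crucial, is the second term: using $|\nabla\eta_\delta|\le C/\delta$ and $\operatorname{supp}\nabla\eta_\delta\subset B_{2\delta}$,
\[
\int_{\mathbb{R}^N}|\varphi|^p|\nabla\eta_\delta|^p\,d\mu\le \|\varphi\|_\infty^p\,\frac{C^p}{\delta^p}\int_{B_{2\delta}}d\mu
=C^p 2^p\|\varphi\|_\infty^p\cdot\frac{1}{(2\delta)^p}\int_{B_{2\delta}}d\mu,
\]
which tends to $0$ by assumption (\ref{eq:cond1}). Combining the two estimates gives $\eta_\delta\varphi\to\varphi$ in $W^{1,p}_\mu$, and since any $u\in W^{1,p}_\mu$ is a $W^{1,p}_\mu$-limit of $C_c^\infty(\mathbb{R}^N)$ functions, a diagonal argument yields density of $C_c^\infty(\Omega)$, completing the proof.
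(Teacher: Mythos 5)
Your proof is correct and follows essentially the same route as the paper: the paper uses the cutoff $\vartheta_n(x)=\vartheta(nx)$ vanishing on $B_{1/(2n)}$ (your $\eta_\delta$ with $\delta\sim 1/n$), controls the crucial term $\varphi\nabla\vartheta_n$ by $n^p\|\varphi\|_\infty^p\int_{B_{1/n}}d\mu$ exactly as you do via \eqref{eq:cond1}, and handles the remaining terms by dominated convergence where you use the sup-norm bound together with $\mu(B_{2\delta})\to 0$ (itself a consequence of \eqref{eq:cond1}). The two arguments are interchangeable.
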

\begin{proof}
Let $u\in C_c^\infty(\R^N)$. We have to approximate $u$ with functions in $C_c^{\infty}(\Omega)$ with respect to the norm $\| \cdot \|_{W_\mu^{1,p}}$.

Let $\vartheta\in C_b^{\infty}(\R^N)$ such that $\chi_{B^c_1}\leq \vartheta\leq \chi_{B^c_{\frac{1}{2}}}$, and set
$\vartheta_n(x)=\vartheta(nx)$. We observe that
$\vartheta_n\to 1$ pointwisely in $\Omega$ and
$\|\nabla \vartheta_n\|_{\infty}\leq Cn$.
We have
\begin{align*}
\|u-(u\vartheta_n)\|^{p}_{W_\mu^{1,p}}\leq
C\left(\|u(1-\vartheta_n)\|^{p}_{p,\mu}+\|\nabla \left( u(1-\vartheta_n)\right)\|^{p}_{p,\mu}\right).
\end{align*}
The first term of the right hand side converges to $0$ by dominated convergence. As regards the second one we have
\begin{align*}
\|\nabla u(1-\vartheta_n)\|^{p}_{p,\mu}
  &\leq
  C\left(\int_{\R^N}(1-\vartheta_n)^p|\nabla u|^p\,d\mu +\int_{\R^N}|\nabla \vartheta_n|^p | u|^p\,d\mu\right)\\
&\leq C\left( \int_{\R^N}(1-\vartheta_n)^p|\nabla u|^p\,d\mu +n^p\int_{B_{\frac{1}{n}}}|   u|^p\,d\mu\right)\\
&\leq C\left(\int_{\R^N}(1-\vartheta_n)^p|\nabla u|^p\,d\mu +n^p\|u\|_{\infty}^p\int_{B_{\frac{1}{n}}} \,d\mu\right).\\
\end{align*}
So, the first integral converges to $0$ by dominated convergence, the last one by Condition \eqref{eq:cond1}. Thus,
$$\lim_{n\to \infty}\|u-(u\vartheta_n)\|^{p}_{W_\mu^{1,p}}=0.$$

\end{proof}
\begin{corollary}
Let $p<N$. If $\mu \in W^{1,p}_{\loc}\left( \R^N \right)$ then
$C_c^{\infty}\left( \Omega \right)$ is dense in $W_\mu^{1,p}$.
\end{corollary}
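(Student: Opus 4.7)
The plan is to reduce the corollary to Proposition \ref{prop:appendix}: since $C_c^{\infty}(\R^N)$ is dense in $W_\mu^{1,p}$ by definition, I only need to check that condition \eqref{eq:cond1} is satisfied, i.e.,
\[
\lim_{\delta\to 0}\frac{1}{\delta^p}\int_{B_\delta}\mu(x)\,dx=0.
\]
Once \eqref{eq:cond1} is verified, Proposition \ref{prop:appendix} immediately gives that $C_c^{\infty}(\Omega)$ is dense in $W_\mu^{1,p}$.

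The key ingredient is the Sobolev embedding. Since $\mu\in W^{1,p}_{\loc}(\R^N)$ with $p<N$, we have $\mu\in L^{p^*}_{\loc}(\R^N)$, where $p^*=\frac{Np}{N-p}$. Applying H\"older's inequality on $B_\delta$ (for $\delta\le 1$, say) with exponents $p^*$ and $(p^*)'=\frac{p^*}{p^*-1}$, I would obtain
\[
\int_{B_\delta}\mu\,dx\;\le\;\|\mu\|_{L^{p^*}(B_1)}\,|B_\delta|^{1-1/p^*}\;=\;C\,\|\mu\|_{L^{p^*}(B_1)}\,\delta^{N(1-1/p^*)}.
\]
A direct computation gives $N\bigl(1-\tfrac{1}{p^*}\bigr)=\frac{Np-N+p}{p}$, so dividing by $\delta^p$ yields the key exponent
\[
N\Bigl(1-\tfrac{1}{p^*}\Bigr)-p=\frac{(p-1)(N-p)}{p}.
\]

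For $p>1$ this exponent is strictly positive (because $p<N$), so the right-hand side tends to $0$ as $\delta\to 0$, proving \eqref{eq:cond1}. The only subtle case is $p=1$, where the exponent vanishes; here I would instead use that $\|\mu\|_{L^{p^*}(B_\delta)}\to 0$ as $\delta\to 0$ by absolute continuity of the Lebesgue integral (since $\mu\in L^{p^*}_{\loc}$ and $|B_\delta|\to 0$), which replaces the vanishing of the power of $\delta$ and yields the conclusion. The main technical point—and really the only obstacle—is this Sobolev-type estimate and the correct bookkeeping of the exponents; once those are in place the corollary follows directly from Proposition \ref{prop:appendix}.
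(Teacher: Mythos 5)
Your proof follows essentially the same route as the paper's: reduce to Proposition \ref{prop:appendix} by verifying condition \eqref{eq:cond1}, then use the Sobolev embedding $\mu\in L^{p^*}_{\loc}$ together with H\"older's inequality to get the bound $C\delta^{(p-1)(N-p)/p}$. In fact your treatment of the borderline case $p=1$ (via absolute continuity of the integral) is slightly more careful than the paper's, which asserts strict positivity of the exponent $\frac{N(p-1)}{p}+1-p$ for all $p<N$ even though it vanishes at $p=1$.
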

\begin{proof}
By Proposition \ref{prop:appendix}, it suffices to verify  Condition \eqref{eq:cond1}. Let us observe first that
since $\mu \in W^{1,p}_{\loc}\left( \R^N \right)$, by Sobolev's embedding theorem, it follows that
$\mu\in L_\loc^{p^*}(\R^N)$ with $p^*=\frac{Np}{N-p}$.\\
Then,
\begin{align*}
\frac{1}{\delta^p}\int_{B_\delta}\mu\, dx
\leq \frac{1}{\delta^p}\left( \int_{B_\delta}\mu^{p^*}\,dx \right)^{\frac{1}{p^*}}\left( \int_{B_\delta}\,dx \right)^{\frac{p-1}{p}+\frac{1}{N}}
\leq C\delta ^{\frac{N(p-1)}{p}+1-p}.
\end{align*}
One can easily verify that $\frac{N(p-1)}{p}+1-p>0$ if $p<N.$
\end{proof}

\bibliography{bibfile-hardy,bibfile}{}
\bibliographystyle{amsplain}

\end{document}